\def\<{\langle}
\def\>{\rangle}
\def\c{\cdot}
\newcommand{\bmx}{\begin{pmatrix}}
\newcommand{\emx}{\end{pmatrix}}
\newtheorem{thm}{Theorem}[section]
\newtheorem{lem}[thm]{Lemma}
\newtheorem{cor}[thm]{Corollary}
\newtheorem{pro}[thm]{Proposition}
\newtheorem{ex}[thm]{Example}
\theoremstyle{definition}
\newtheorem{defi}{Definition}[section]
\theoremstyle{remark}
\newtheorem{rmk}{Remark}[section]
\begin{document}
\title{\sf  Mock-Lie bialgebras and mock-Lie analogue of the classical Yang-Baxter equation}

\author{ K. Benali$^{1}$
 \footnote { E-mail: karimabenali172@yahoo.fr},\ { T. Chtioui$^{1}$
 \footnote { E-mail: chtioui.taoufik@yahoo.fr}, A. Hajjaji$^{1}$
 \footnote { E-mail: atefhajjaji100@gmail.com},
\ and  S. Mabrouk$^{2}$
 \footnote { E-mail: mabrouksami00@yahoo.fr (Corresponding author)}
}\\
{\small 1.  University of Sfax, Faculty of Sciences,  BP
1171, 3038 Sfax, Tunisia.} \\
{\small 2.  University of Gafsa, Faculty of Sciences, 2112 Gafsa, Tunisia.}}
\date{}
\maketitle
\begin{abstract}
The aim of this paper is to introduce the notion of a mock-Lie bialgebra which is equivalent to a Manin triple of mock-Lie
algebras. The study of a special case  called coboundary mock-Lie bialgebra leads to the introduction the mock-Lie Yang-Baxter equation on a mock-Lie algebra which is an analogue of the classical
Yang-Baxter equation on a Lie algebra. Note that a skew-symmetric solution of mock-Lie
Yang-Baxter equation gives a mock-Lie bialgebra.  Finally, the notation of $\mathcal O$-operators are studied to construct skew-symmetric solution  of mock-Lie Yang-Baxter equation. 
\end{abstract}
\textbf{Key words}:  mock-Lie algebra, mock-Lie bialgebra, Matched pair, Manin triple, mock-Lie Yang-Baxter equation, $\mathcal O$-operators.\\
\textbf{M. S. C} (2020):16W10, 16T10, 16T15, 16T25, 17B38.

\numberwithin{equation}{section}

\tableofcontents

\section{Introduction}
A while ago, a new class of algebras emerged in the literature the so called mock-Lie algebras. These are commutative algebras satisfying the Jacobi identity. They were appeared for the first time in \cite{B.F}
and since then a lot of works are done on this subject, note for example \cite{P.Z,C.G}. These algebras live a dual life: as member of a very particular class of Jordan algebras and as strange cousins of Lie algebras.\\
%
The theory of Lie bialgebra and Poisson Lie groups dates back to the early 80s.

Poisson Lie groups are Lie groups equipped with an additional structure, a Poisson bracket satisfying a compatibility condition with the group multiplication. The infinitesimal object associated with a poisson Lie group is the tangent vector space at the origin of the group, which is in a naturel way a Lie algebra $\mathfrak{g}$,  see for instance\cite{R.J.B,G.C.R}.The Poisson structure on the group induces on the Lie algebra an additional structure which is nothing but a Lie algebra structure on the dual vector space $\mathfrak{g}^*$ satisfying a compatibility condition with the Lie bracket on $\mathfrak{g}$ itself. Such a Lie algebra together with its additional structure is called a Lie bialgebra. So
a bialgebra structure on a given algebra is obtained by a corresponding set of comultiplication together with the set of compatibility conditions between multiplication and comultiplication \cite{B.L}.
For example take a finite dimensional vector space $V$ with a given algebraic structure, this can be acheived by equipping the dual space $V^{*}$ with the same algebraic structure and a set of compatibility conditions between the structures on $V$ and those on $V^{*}$.
Among the well-known bialgebra structures, we have the associative bialgebra and infinitesimal bialgebra introduced in \cite{M.G,S.G}.
 Note that these two structures have the same associative multiplications on $V$ and $V^{*}$.
They are distinguished only by the compatibility conditions, with the comultiplication acting as
an homomorphism (respectively a derivation) on the multiplication for the associative bialgebra (respectively
the infinitesimal bialgebra). In general, it is quite common to have multiple bialgebra structures that differ only by their compatibility conditions.
A good compatibility condition is prescribed on one hand by a strong motivation and potential
applications, and on the other hand by a rich structure theory and effective constructions.
See also \cite{D.B,Bai2,BasdouriFadousMabroukMakh,Dassoundo1,Drinfeld,FadousSamiMakhlouf,Makhlouf1,MaMakhloufSilvestrov, Zhelyabin,Zhelyabin2,Zhelyabin3,Dassoundo2,V.A} for more details.

One reason for the usefulness of the Lie bialgebra is that it has a coboundary theory, which
leads to the construction of Lie bialgebras from solutions of the classical Yang-Baxter equations.
The origin of the Yang-Baxter-equations is purely physics. They were first introduced by Baxter, McGuire, and Yang in \cite{R.J1,R.J2, C.N}.
Later on,  this equation attracts the attention of scientists and becomes one of the most basic equation in mathematical physics \cite{B.G.T,B.B.G}.
Namely it plays a crucial role for introducing the theory of quantum groups. This exceptional importance can be seen  in many other domaines like: quantum groups, knot theory, braided categories, analysis of integrable systems, quantum mechanics, non-commutative descent theory, quantum computing, non-commutative geometry, etc. The various forms of the Yang-Baxter-equation and some of their uses in physics are summarized in \cite{J.H}. 
Many scientists have found solutions for the Yang-Baxter equation, however the full classification of its solutions remains an open problem. In the theory of Lie bialgebras, it is essential to consider the coboundary case, which is
related to the theory of the classical Yang-Baxter equation\cite{B.L, B, Bai1, Hou}. We aim to have an anlogue in th mock-Lie case. 

This paper is organized as follows:
In Section $2$ we recall some basic definitions and constructions about mock-Lie algebras. Section $3$ deals with matched pairs, manin triple and mock-Lie bialgebras. In Section $4$, we introduce and develop the notion of coboundary mock-Lie bialgebras and the mock-Lie Yang-Baxter equation. In Section $5$, we give the $\mathcal{O}$-operators of mock-Lie algebras and construct a solution mock-Lie Yang-Baxter equation.

 Unless otherwise specified, all the vector spaces and algebras are finite dimensional over a field $\mathbb{K}$ of characteristic zero. 
\vspace{0.2 cm}

\textbf{Notations.}
Let $V$ and $W$ be two vector spaces:
\begin{enumerate}
    \item Denote by $\tau:V\otimes W \rightarrow W\otimes V$ the switch  isomorphism, $\tau(v\otimes w)=w\otimes v$.
    \item For a linear map $\Delta: V \rightarrow \otimes^{2}V$ , we use Sweedler's notation $\Delta(x) = \sum_{(x)}
 x_1\otimes 
 x_2$ for $x \in V$. We will often omit the summation sign $\sum_{(x)}$
to simplify the notations.
    \item Denote by $V^*=Hom(V,\mathbb{K})$ the linear dual of $V$. For $\varphi\in V^*$ and $u\in V$, we write 
     $\langle \varphi,u\rangle:=\varphi(u)\in \mathbb{K}$. 
    \item For a linear map $\phi : V \rightarrow W$, we define the map $\phi^{*} :W^{*} \rightarrow V^{*}$ by
\begin{equation}
    \langle \phi^{*}(\xi), v \rangle=\langle \xi, \phi(v) \rangle,\;\forall v\in V,\xi\in W^{*}.
\end{equation}
    \item For an element $x$ in a mock-Lie algebra $(A,\bullet)$ and $n\geq 2$, define the adjoint map $L(x):\otimes^{n}A\rightarrow \otimes^{n}A$ by 
    \begin{equation}\label{adjointaction}
      L(x)(y_1\otimes \cdots \otimes y_n)=\sum_{i=1}^{n} y_1\otimes \cdots \otimes y_{i-1}\otimes x\bullet y_i\otimes y_{i+1}\otimes  \cdots \otimes y_n 
    \end{equation}
    for all $y_1,\ldots,y_n\in A.$
    Conversely, given $Y=y_1\otimes \cdots \otimes y_n$, we define $L(Y):\mathfrak{g}\rightarrow \otimes^{n}\mathfrak{g}$ by
    \begin{equation*}
        L(Y)(x)=L(x)(Y), \ \text{for}\  x\in \mathfrak{g}.
    \end{equation*}
\end{enumerate}
\section{Preliminaries}
In this section, we provide some preliminaries  about mock-Lie algebras and left  mock-pre-Lie algebras. Our main references are \cite{Agore1,B.F,Attan,Baklouti}.

\begin{defi}
A \textbf{mock-Lie} algebra  is a pair  $(A,\bullet)$ consisting of a vector space $A$ together with a multiplication $\bullet: A \otimes A \to A$ satisfying 
\begin{align}
  &  x \bullet y= y \bullet x, \quad \mathrm{(Commutativity)},\\ 
 &   x\bullet(y\bullet z)+y\bullet(z\bullet x)+z\bullet(x\bullet y)=0,  \quad \mathrm{ (Jacobi\ identity)},  \label{Jacoby id}
\end{align}
   for any  $x,y,z\in A$. 
The Jacobi identity \eqref{Jacoby id} is equivalent to 
\begin{align}
&  x\bullet (y \bullet z)=-(x\bullet y)\bullet z -y \bullet (x \bullet z).  
\end{align}
 In other words, the left multiplication $L: A \to End(A)$ defined by $L(x)y=x \bullet y$, is anti-derivation on $A$.  Recall that   a linear map  $D: A \to  A$ is called  \textbf{anti-derivation}  if  for all $x,y\in A$, 
    \begin{equation*}
        D(x \bullet y)=-D(x)\bullet y-x\bullet D(y). 
    \end{equation*}

\end{defi}

\begin{ex}
Let $A$ be a $4$-dimensional vector space with basis $\mathcal{B}=\{e_1, e_2, e_3, e_4\}$. Then 
$(A,\bullet)$ is a mock-Lie algebra where  the product $\bullet$ is defined on the basis $\mathcal{B}$ by 
$e_1 \bullet e_1 = e_2$,\  $e_1 \bullet e_3 = e_4$.
\end{ex}
 
\begin{ex}
Recall that 
an  \textbf{anti-associative} algebra is a pair $(A, \star)$ consisting of a vector space $A$ together with a product $\star: A \otimes A \to A$ such that the anti-associator vanishes, i.e, 
\begin{align}
  Aass(x,y,z):=(x\star y)\star z+x\star(y\star z)=0,\quad \forall x,y,z \in A.   
\end{align}
Let $(A, \star)$ be an anti-associative algebra. Then, 
$(A,\bullet)$ is a mock-Lie algebra, where 
$x \bullet y := x\star y + y\star x,\; \forall x, y \in  A$.
\end{ex}
 Now, we recall the definition of representations of a mock-Lie algebra.
\begin{defi}
A \textbf{representation} of a mock-Lie  algebra $(A,\bullet)$ is a pair $(V,\rho)$ where $V$ is a vector space and $\rho:A \rightarrow End(V)$ is a linear map such that the following equality holds, for all $x,y\in A$,
\begin{equation}
    \rho(x\bullet y)=-\rho(x)\rho(y)-\rho(y)\rho(x).
\end{equation}
\end{defi}
\begin{ex}
Let $(A,\bullet)$ be a mock-Lie algebra. Then $(A,L)$ is a representation of $A$ on itself called the adjoint representation.
\end{ex}
An equivalent  characterisation of representations on mock-Lie algebras is given in the following.
\begin{pro}
Let $(A, \bullet)$ be a mock-Lie algebra, $V$ be a vector space and $\rho : A \to End(V)$ a linear map. Then 
$(V, \rho)$ be a representation of $A$ if and only if 
the direct sum $A \oplus V$ together  with the multiplication  defined by
\begin{equation}
    (x+u)\bullet_{A\oplus V}(y+v)=x\bullet y +\rho(x)v+\rho(y)u,\;\forall x,y\in A,\;\forall u,v\in V, 
\end{equation}
is a mock-Lie algebra. This mock-Lie algebra is called the semi-direct product of $A$ and $V$ and it  is denoted by $A\ltimes_{\rho}V$.
\end{pro}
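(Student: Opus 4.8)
The plan is to prove the equivalence by unwinding what it means for the multiplication $\bullet_{A\oplus V}$ to satisfy commutativity and the Jacobi identity, and matching the resulting conditions against the axioms for $(A,\bullet)$ being a mock-Lie algebra and $(V,\rho)$ being a representation. First I would observe that commutativity of $\bullet_{A\oplus V}$ is immediate from the commutativity of $\bullet$ and the manifest symmetry of the formula $\rho(x)v + \rho(y)u$ in the two arguments, so this imposes no new condition and only the Jacobi identity needs analysis.

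Next I would expand the Jacobi identity $X\bullet_{A\oplus V}(Y\bullet_{A\oplus V} Z) + Y\bullet_{A\oplus V}(Z\bullet_{A\oplus V} X) + Z\bullet_{A\oplus V}(X\bullet_{A\oplus V} Y) = 0$ for general elements $X = x+u$, $Y = y+v$, $Z = z+w$ with $x,y,z\in A$ and $u,v,w\in V$. Since the expression is trilinear, it suffices to check it on the four ``homogeneous'' types of triples: all three in $A$; two in $A$ and one in $V$; one in $A$ and two in $V$; and all three in $V$. The all-$A$ case reduces exactly to the Jacobi identity for $(A,\bullet)$. The case of three elements in $V$ gives $0=0$ since any product of two elements of $V$ vanishes in the semidirect construction. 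The genuinely informative cases are the mixed ones.

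The key computation is the ``two in $A$, one in $V$'' case, say $X = x$, $Y = y$, $Z = w \in V$: expanding gives, in the $V$-component, $\rho(x)\rho(y)w + \rho(y)\rho(x)w + \rho(x\bullet y)w = 0$ (after using commutativity to organize terms), which is precisely the representation axiom $\rho(x\bullet y) = -\rho(x)\rho(y) - \rho(y)\rho(x)$. I would then check that the ``one in $A$, two in $V$'' case produces no further condition: with $X = x$, $Y = v$, $Z = w$, the $A$-component is zero automatically, and the $V$-component collapses to $\rho(x\bullet \text{(something)})$-type terms that vanish or are already subsumed — concretely it should reduce to an instance of the same representation identity applied appropriately, or to $0=0$ because products within $V$ vanish. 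Putting these together: $\bullet_{A\oplus V}$ is mock-Lie iff the Jacobi identity holds for $A$ and the representation axiom holds for $\rho$, i.e. iff $(A,\bullet)$ is mock-Lie (which is assumed) and $(V,\rho)$ is a representation.

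The main obstacle is purely bookkeeping: in the mixed cases one must carefully track which terms land in $A$ versus $V$ and use commutativity of $\bullet$ repeatedly to recognize the representation axiom in the exact normalized form stated (with the signs and the symmetrization $\rho(x)\rho(y) + \rho(y)\rho(x)$). There is no conceptual difficulty — the equivalence is a direct translation — but care is needed so that the cyclic sum in the Jacobi identity is correctly matched with the non-cyclic, symmetrized form of the representation condition. I would present the forward direction (semidirect product mock-Lie $\Rightarrow$ representation) and the converse simultaneously, since both follow from the single identity obtained by the expansion above.
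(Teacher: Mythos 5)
Your proof is correct. The paper actually states this proposition without any proof, so there is nothing to compare against; your direct expansion is the standard argument, and the computation checks out --- commutativity is automatic, the cyclic Jacobi sum for $(x+u,y+v,z+w)$ has $A$-component equal to the Jacobi identity in $A$ and $V$-component equal to $\bigl(\rho(y\bullet z)+\rho(y)\rho(z)+\rho(z)\rho(y)\bigr)u$ plus its two cyclic analogues, which vanishes for all $u,v,w$ exactly when $\rho$ satisfies the representation axiom, while the remaining mixed and all-$V$ cases reduce to $0=0$.
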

\begin{defi}
Let $(A,\bullet)$ be a mock-Lie algebra and two representations $(V_1,\rho_1)$ and $(V_2,\rho_2)$. A linear map $\phi:V_1 \rightarrow V_2$ is  said to be a  morphism of representations if 
\begin{equation}
    \rho_2(x)\circ \phi=\phi \circ \rho_1(x),\;\forall x\in A.
\end{equation}
If $\phi$ is bejictive, then $(V_1,\rho_1)$ and $(V_2,\rho_2)$ are equivalent $($isomorphic$)$.
\end{defi}
 To relate matched pairs of mock-Lie algebras to mock-Lie bialgebras and Manin
triples for mock-Lie algebras in the next section, we need the notions of the coadjoint representation, which is the dual
representation of the adjoint representation. In the following we recall these facts. 

Let $(A, \bullet)$ be a mock-Lie algebra and $(V, \rho)$ be a representation of $A$. Let $V^*$ be the dual vector space of $V$. Define the linear map $\rho^*:A \rightarrow End(V^*)$ as
\begin{equation}\label{repdual}
    \langle \rho^*(x)u^*,v\rangle =\langle u^*,\rho(x)v\rangle, \quad \forall x\in A,\;v\in V,\;u^*\in V^*,
\end{equation}
where $\langle\cdot,\cdot\rangle$ is the usual pairing between $V$ and the dual space $V^*$. 
With the above notations, we have the following
\begin{pro}
Let $(V,\rho)$ be a representation of a mock-Lie algebra $(A, \bullet)$. Then $(V^*,\rho^*)$ is a representation of $A$ on $V^*$.
\end{pro}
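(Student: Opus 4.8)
The plan is to verify the single defining identity of a representation directly, by a pairing computation that exploits the non-degeneracy of the pairing $\langle\cdot,\cdot\rangle$ between $V$ and $V^*$. Fix $x,y\in A$, $u^*\in V^*$ and $v\in V$. Using the definition \eqref{repdual} of $\rho^*$ together with the representation property of $(V,\rho)$, I would first write
\[
\langle \rho^*(x\bullet y)u^*,v\rangle=\langle u^*,\rho(x\bullet y)v\rangle=-\langle u^*,\rho(x)\rho(y)v\rangle-\langle u^*,\rho(y)\rho(x)v\rangle .
\]
Next I would move the operators across the pairing one factor at a time, again by \eqref{repdual}: $\langle u^*,\rho(x)\rho(y)v\rangle=\langle \rho^*(x)u^*,\rho(y)v\rangle=\langle \rho^*(y)\rho^*(x)u^*,v\rangle$, and symmetrically $\langle u^*,\rho(y)\rho(x)v\rangle=\langle \rho^*(x)\rho^*(y)u^*,v\rangle$. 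Substituting these back, the right-hand side becomes $\langle -\rho^*(x)\rho^*(y)u^*-\rho^*(y)\rho^*(x)u^*,\,v\rangle$. Since $v\in V$ is arbitrary and the pairing is non-degenerate, this yields $\rho^*(x\bullet y)=-\rho^*(x)\rho^*(y)-\rho^*(y)\rho^*(x)$, which is exactly the condition for $(V^*,\rho^*)$ to be a representation of $A$.

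The only point that requires a little attention is that dualizing reverses the order of composition: in the notation of item (4) of the Notations one has $\rho^*(x)=\rho(x)^*$, hence $(\rho(x)\rho(y))^*=\rho^*(y)\rho^*(x)$. This causes no difficulty here, because the right-hand side of the mock-Lie representation axiom, $-\rho(x)\rho(y)-\rho(y)\rho(x)$, is symmetric under interchanging $x$ and $y$ (as it must be, since $x\bullet y=y\bullet x$). Thus the two swapped terms reassemble into precisely the required expression. In short, I expect no genuine obstacle: the statement is a formal consequence of the definition of $\rho^*$, the representation axiom for $\rho$, and non-degeneracy of the pairing.

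Alternatively, one could deduce the claim from the semi-direct product description of representations given above, by checking that $A\ltimes_{\rho^*}V^*$ satisfies commutativity and the Jacobi identity; but this route is strictly longer, so I would prefer the direct pairing argument outlined here.
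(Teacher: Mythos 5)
Your proof is correct; in fact the paper states this proposition without any proof, and your direct pairing computation is precisely the standard argument one would supply to fill that gap. The one point of substance — that dualization reverses composition, $(\rho(x)\rho(y))^*=\rho^*(y)\rho^*(x)$, but the anti-commutator $-\rho(x)\rho(y)-\rho(y)\rho(x)$ on the right-hand side of the mock-Lie representation axiom is invariant under that reversal, so no compensating sign is needed in the definition of $\rho^*$ (unlike the Lie-algebra case) — is exactly the point you identify and handle correctly.
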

Consider the case when $V=A$ and 
define the  linear map $L^*:A\rightarrow End(A^*)$ by 
\begin{equation}
    \langle L^*({x})(\xi),y\rangle=\langle \xi,L({x})y \rangle,\quad \forall x,y\in A,\;\xi \in A^*.
\end{equation}
Then we have the following:
\begin{cor}
Let $(A,\bullet)$ be a mock-Lie algebra and $(A,L)$ be the adjoint representation of $A$. Then $(A^*,L^*)$ is a representation of $(A,\bullet)$ on $A^*$ which is called the coadjoint representation.
\end{cor}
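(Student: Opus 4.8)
The plan is to obtain this as an immediate corollary of the preceding proposition. By the Example above, $(A,L)$ is the adjoint representation of $(A,\bullet)$ on itself, so applying the result that the dual $(V^*,\rho^*)$ of any representation $(V,\rho)$ is again a representation — with $V=A$ and $\rho=L$ — yields that $(A^*,L^*)$ is a representation of $(A,\bullet)$, because the map $L^*$ is by construction exactly the map $\rho^*$ of \eqref{repdual} in the special case $V=A$, $\rho=L$.

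For completeness I would also give the direct verification. First I would unwind the defining identity of a representation, i.e. show that
\[
L^*(x\bullet y) = -L^*(x)L^*(y) - L^*(y)L^*(x), \qquad \forall\, x,y\in A .
\]
Pairing both sides with an arbitrary $z\in A$ and a functional $\xi\in A^*$, and using the definition of $L^*$ together with $L(x)y=x\bullet y$, the left-hand side becomes $\langle \xi,(x\bullet y)\bullet z\rangle$, while the right-hand side becomes $-\langle\xi,\, x\bullet(y\bullet z)\rangle - \langle\xi,\, y\bullet(x\bullet z)\rangle$; note that the order of the two operators is reversed when passing to the dual. Hence the representation identity for $L^*$ is equivalent to the identity
\[
(x\bullet y)\bullet z = -\,x\bullet(y\bullet z) - y\bullet(x\bullet z)
\]
in $A$, which is precisely the ``anti-derivation'' form of the Jacobi identity recorded in the Definition.

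Finally I would check this last identity from the axioms: by commutativity $(x\bullet y)\bullet z = z\bullet(x\bullet y)$, and by the Jacobi identity \eqref{Jacoby id}, $z\bullet(x\bullet y) = -\,x\bullet(y\bullet z) - y\bullet(z\bullet x)$; applying commutativity once more to rewrite $z\bullet x$ as $x\bullet z$ gives the desired equality. I do not expect any genuine obstacle here — the argument is purely formal; the only points that require a little care are the reversal of the order of $L^*(x)$ and $L^*(y)$ under dualization of a composite, and the systematic use of commutativity to convert $z\bullet x$ into $x\bullet z$ when invoking the Jacobi identity.
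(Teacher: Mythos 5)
Your proposal is correct and matches the paper's approach: the paper states this as an immediate corollary of the preceding proposition that the dual $(V^*,\rho^*)$ of any representation $(V,\rho)$ is again a representation, specialized to $(V,\rho)=(A,L)$, and gives no further proof. Your additional direct verification, reducing the identity $L^*(x\bullet y)=-L^*(x)L^*(y)-L^*(y)L^*(x)$ to the anti-derivation form of the Jacobi identity via commutativity, is also correct (and correctly accounts for the order reversal under dualization).
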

If there is a mock-Lie algebra structure on the dual space $A^*$, we denote the left multiplication by $\mathcal{L}$.
\begin{defi}
Let $(A,\bullet)$ be a mock-Lie algebra and $(V,\rho)$ be a representation. A linear map $T:V\rightarrow A$ is called an \textbf{$\mathcal{O}$-operator} associated to$(V,\rho)$ if $T$ satisfies
\begin{equation}
   T(u)\bullet T(v)=T\big(\rho(Tu)v+\rho(Tv)u\big),\quad \forall u,v\in V. 
\end{equation}
In the case $(V,\rho)=(A,L)$, the $\mathcal{O}$-operator $T$ is called a Rota-Baxter operator (of weight zero).
\end{defi}
\begin{defi}
A \textbf{mock-pre-Lie} algebra is a vector space $A$ equipped with a linear map $\c:A\otimes A\rightarrow A$ satisfying the following identity
\begin{equation}\label{premock}
    Aass(x,y,z)=-Aass(y,x,z),\quad \forall x,y,z\in A,
\end{equation}
Recall that  $Aass(x,y,z)=(x\c y)\c z+x\c (y\c z)$. Therefore, Eq. \eqref{premock} is equivalent to
\begin{equation*}
    (x\star y)\c y=(x\c y)\c z-y\c (x\c z),
\end{equation*}
where $x\star y=x\c y+y\c x$, for all $x, y \in A$.
\end{defi}
Note that if $(A,\c)$ is a mock-pre-Lie algebra, then the product given by 
\begin{equation}
    x\star y=x\c y+y\c x,\quad \forall x,y \in A,
\end{equation}
defines a mock-Lie algebra structure, which is called the sub-adjacent mock-Lie algebra of $(A,\c)$, and denoted by $A^{ac}$. Furthermore,  $(A, \c)$
is called the compatible mock-pre-Lie algebra structure on $A^{ac}$.\\
On the other hand, let $\Theta:A \to End(A)$ defined  by  $\Theta(x)y=x\c y,\ \forall x,y\in A$.
Then $(A,\Theta)$ is a representation of the mock-Lie algebra $A^{ac}$.
\begin{pro}
Let $(A,\bullet)$ be a mock-Lie algebra and $(V,\rho)$ be a representation of $A$. If $T$ is an $\mathcal{O}$-operator associated to $(V,\rho)$, then $(V,\c)$ is a  mock-pre-Lie algebra, where 
\begin{equation}
    u\c v=\rho(Tu)v,\quad \forall u,v\in V.
\end{equation}
\end{pro}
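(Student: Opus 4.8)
The plan is to verify the single defining identity of a mock-pre-Lie algebra, namely $Aass(u,v,w) = -Aass(v,u,w)$ for all $u,v,w \in V$, directly from the definition $u\c v = \rho(Tu)v$ together with the $\mathcal{O}$-operator condition and the representation axiom. Since $Aass(u,v,w)=(u\c v)\c w + u\c(v\c w)$, the identity to be proved is equivalent to the vanishing
\[
(u\c v)\c w + (v\c u)\c w + u\c(v\c w) + v\c(u\c w) = 0 .
\]

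First I would rewrite the first two summands. By definition $(u\c v)\c w = \rho\big(T(\rho(Tu)v)\big)w$ and $(v\c u)\c w = \rho\big(T(\rho(Tv)u)\big)w$; adding them and using linearity of $T$ and $\rho$ gives $\rho\big(T(\rho(Tu)v + \rho(Tv)u)\big)w$. Here is the key point: although we have no control over $T(\rho(Tu)v)$ individually, the $\mathcal{O}$-operator relation $T(u)\bullet T(v) = T\big(\rho(Tu)v + \rho(Tv)u\big)$ evaluates precisely that symmetric combination, so
\[
(u\c v)\c w + (v\c u)\c w = \rho(Tu\bullet Tv)w .
\]
This is exactly why one must symmetrize the associator in $u$ and $v$ rather than try to treat each associator separately; that is the only genuine subtlety in the argument, and it is the step I expect to be the main (mild) obstacle to organize cleanly.

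Next I would apply the representation axiom $\rho(x\bullet y) = -\rho(x)\rho(y) - \rho(y)\rho(x)$ with $x = Tu$ and $y = Tv$, obtaining
\[
\rho(Tu\bullet Tv)w = -\rho(Tu)\rho(Tv)w - \rho(Tv)\rho(Tu)w = -\,u\c(v\c w) - v\c(u\c w),
\]
where the last equality merely re-reads $\rho(Tu)\rho(Tv)w = \rho(Tu)(v\c w) = u\c(v\c w)$ and symmetrically. Substituting this back into the displayed expression gives the desired vanishing, i.e. $Aass(u,v,w) = -Aass(v,u,w)$, so $(V,\c)$ is a mock-pre-Lie algebra. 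Note that commutativity and the Jacobi identity of $\bullet$ are not invoked beyond what is already encoded in the representation axiom, so once the symmetrization is set up the verification is essentially a two-line computation.
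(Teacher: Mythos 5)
Your proof is correct. The paper states this proposition without giving a proof, and your argument is the standard verification one would expect: symmetrize the anti-associator in its first two arguments so that the $\mathcal{O}$-operator identity $T(u)\bullet T(v)=T\big(\rho(Tu)v+\rho(Tv)u\big)$ can be applied to the combination $(u\cdot v)\cdot w+(v\cdot u)\cdot w=\rho(Tu\bullet Tv)w$, and then invoke the representation axiom $\rho(x\bullet y)=-\rho(x)\rho(y)-\rho(y)\rho(x)$ to cancel against $u\cdot(v\cdot w)+v\cdot(u\cdot w)$. Every step checks out against the paper's definitions, and you correctly identify the symmetrization as the only nontrivial organizational point.
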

\begin{pro}\label{compremock}
Let $(A,\bullet)$ be a mock-Lie algebra. Then there is a compatible  mock-pre-Lie algebra if and only if there exists an invertible $\mathcal{O}$-operator $T : V \rightarrow A$ associated
to a representation $(V, \rho)$. Furthermore, the compatible  mock-pre-Lie structure on $A$ is given by 
\begin{equation}
    x\c y=T\big(\rho(x)T^{-1}(y)\big),\quad \forall x,y\in A.
\end{equation}
\end{pro}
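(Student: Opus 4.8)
The plan is to deduce both directions from the two immediately preceding propositions, using the identity operator as the $\mathcal{O}$-operator in one direction and transporting structure along $T$ in the other.

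$(\Leftarrow)$ Assume $T:V\to A$ is an invertible $\mathcal{O}$-operator associated to a representation $(V,\rho)$. By the preceding proposition, $V$ carries a mock-pre-Lie product $u\c v:=\rho(Tu)v$. Since $T$ is a linear isomorphism, I would push this product forward to $A$: for $x,y\in A$ set $x\c y:=T\big((T^{-1}x)\c(T^{-1}y)\big)$, which, after unwinding the definition of the product on $V$, is exactly $T\big(\rho(x)T^{-1}(y)\big)$. Being the image of a mock-pre-Lie algebra under an algebra isomorphism, $(A,\c)$ satisfies \eqref{premock} automatically, so the only thing left to check is that $\c$ is compatible with $\bullet$, i.e. that the sub-adjacent mock-Lie algebra of $(A,\c)$ is $(A,\bullet)$. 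Writing $u=T^{-1}(x)$ and $v=T^{-1}(y)$, one has $x\c y+y\c x=T\big(\rho(Tu)v+\rho(Tv)u\big)$, and here the defining equation of an $\mathcal{O}$-operator turns the right-hand side into $T(u)\bullet T(v)=x\bullet y$; this is the one genuinely non-formal step of the argument.

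$(\Rightarrow)$ Conversely, assume $(A,\c)$ is a compatible mock-pre-Lie structure on $(A,\bullet)$, so that $x\bullet y=x\c y+y\c x$ for all $x,y\in A$. Take $V=A$, let $\rho=\Theta$ be the representation $\Theta(x)y=x\c y$ of $A^{ac}=(A,\bullet)$ recalled just before the statement, and put $T=\mathrm{id}_A$. Then $T$ is trivially invertible, and the $\mathcal{O}$-operator identity $T(x)\bullet T(y)=T\big(\Theta(Tx)y+\Theta(Ty)x\big)$ collapses to $x\bullet y=x\c y+y\c x$, which holds by compatibility; hence $\mathrm{id}_A$ is an invertible $\mathcal{O}$-operator associated to $(A,\Theta)$. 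Finally $T\big(\rho(x)T^{-1}(y)\big)=\Theta(x)y=x\c y$, so the resulting pre-Lie product is the one we started from, matching the displayed formula.

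Overall the proof is mostly bookkeeping built on the previous two propositions, and I do not anticipate a serious obstacle. The only place where the mock-Lie / $\mathcal{O}$-operator hypotheses are really used is the verification of compatibility in the $(\Leftarrow)$ direction, where the $\mathcal{O}$-operator equation is precisely what identifies the symmetrization of the transported product with $\bullet$; accordingly I would first make sure the pushed-forward product genuinely coincides with $T(\rho(x)T^{-1}(y))$ before invoking that step.
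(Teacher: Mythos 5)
Your proof is correct and complete. The paper actually states this proposition without proof (it sits in the preliminaries, with the surrounding material attributed to the cited references), so there is no argument in the text to compare against; your route — pushing the mock-pre-Lie product $u\cdot v=\rho(Tu)v$ on $V$ forward along the isomorphism $T$ and using the $\mathcal{O}$-operator identity to check that its symmetrization is $\bullet$, then taking $T=\mathrm{id}$ with the representation $\Theta(x)y=x\cdot y$ for the converse — is exactly the standard one from the analogous Lie/pre-Lie setting and is what the authors evidently have in mind.
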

\section{Matched pairs, Manin triples and mock-Lie bialgebras}
In this section, we introduce the notions of  Manin triple of a mock-Lie algebra and 
mock-Lie bialgebras. The equivalence between them is interpreted in terms of matched pairs of
mock-Lie algebras.

We first recall the notion of matched pairs of mock-Lie algebras  $($see \cite{Agore2}$)$. Let $(A,\bullet)$ and $(H,\diamond)$ be two mock-Lie algebras. Let $\rho:A\rightarrow End(H)$ and $\mu:H\rightarrow End(A)$ be two linear maps. On the direct sum $A\oplus H$ of the underlying vector spaces, define a linear map $\circ:\otimes^2 (A\oplus H)\rightarrow A\oplus H$ by 
\begin{equation}\label{eqmatch}
    (x+a)\circ(y+b)=x\bullet y +\mu(b)x+\mu(a)y+a\diamond b+\rho(y)a+\rho(x)b,\;\forall x,y\in A,\;a,b\in H.
\end{equation}
\begin{thm}
Let $(A,\bullet)$ and $(H,\diamond)$ be two mock-Lie algebras. Then $(A\oplus H,\circ)$ is a mock-Lie algebra if and only if $(H,\rho)$ and $(A,\mu)$ are representations of $(A,\bullet)$ and $(H,\diamond)$ respectively, and for all $x,y\in A,\;a,b\in H$, the following compatibility conditions are satisfied:
\begin{align}
    &\label{matched1}\rho(x)(a\diamond b)+\rho(x)a \diamond b +a \diamond \rho(x)b+\rho(\mu(a)x)b+\rho(\mu(b)x)a=0,\\
    &\label{matched2}\mu(a)(x\bullet y)+\mu(a)x \bullet y +x \bullet \mu(a)y+\mu(\rho(x)a)y+\mu(\rho(y)a)x=0.
\end{align}

\end{thm}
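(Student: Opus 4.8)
The plan is to verify directly the two defining axioms of a mock-Lie algebra for $(A\oplus H,\circ)$. Commutativity of $\circ$ is immediate: swapping $x+a$ with $y+b$ in \eqref{eqmatch} and using the commutativity of $\bullet$ and of $\diamond$ shows $(x+a)\circ(y+b)=(y+b)\circ(x+a)$ with no hypothesis on $\rho,\mu$. Hence the whole content of the theorem lies in analysing the Jacobi identity for $\circ$.

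For that, introduce the trilinear map $J(u,v,w)=u\circ(v\circ w)+v\circ(w\circ u)+w\circ(u\circ v)$ on $A\oplus H$. Since $\circ$ is commutative, a one-line check shows $J$ is invariant under the transposition $u\leftrightarrow v$, and it is manifestly cyclic, so $J$ is totally symmetric. By trilinearity it therefore suffices to evaluate $J$ on homogeneous triples, and up to symmetry only four patterns occur: $(x,y,z)\in A^{\times 3}$, $(x,y,c)\in A^{\times 2}\times H$, $(x,b,c)\in A\times H^{\times 2}$, and $(a,b,c)\in H^{\times 3}$.

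The two pure patterns are immediate: on $A^{\times 3}$ the product $\circ$ restricts to $\bullet$ and $J$ is the Jacobi expression of $(A,\bullet)$, which vanishes; symmetrically on $H^{\times 3}$. For the pattern $(x,y,c)$ I would expand each term of $J$ with \eqref{eqmatch} (noting e.g.\ $y\circ c=\mu(c)y+\rho(y)c$ and $x\circ(x\bullet y)\text{-type}$ simplifications using $\mu(0)=0,\rho(0)=0$), then collect the result into its $A$-part and its $H$-part. The $H$-part comes out as $\rho(x)\rho(y)c+\rho(y)\rho(x)c+\rho(x\bullet y)c$, so it vanishes for all $x,y,c$ exactly when $(H,\rho)$ is a representation of $(A,\bullet)$; after one use of commutativity of $\bullet$ to rewrite a summand, the $A$-part is precisely the left-hand side of \eqref{matched2} with $a=c$, hence vanishes exactly when \eqref{matched2} holds. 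The pattern $(x,b,c)$ is treated identically with the roles of $A$ and $H$ (and of $\rho$ and $\mu$) interchanged: its $A$-part yields the representation condition for $(A,\mu)$ and its $H$-part yields \eqref{matched1}.

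Combining the four cases gives both directions at once: $(A\oplus H,\circ)$ is mock-Lie iff $J$ vanishes on every homogeneous triple iff $(H,\rho)$ and $(A,\mu)$ are representations and \eqref{matched1}, \eqref{matched2} hold (the ``only if'' direction being obtained by specialising the vanishing of $J$ to each pattern). The only genuine work is the bookkeeping in expanding the two mixed Jacobi expressions and recognising, after a single application of commutativity of $\bullet$ (resp.\ $\diamond$), that the collected components are literally \eqref{matched2} and \eqref{matched1} together with the two representation identities; no individual step is conceptually hard, but the term matching must be carried out with care.
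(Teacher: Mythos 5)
Your proof is correct. The paper itself states this theorem without proof (it is recalled from the matched-pair literature, citing Agore--Militaru), so there is no argument to compare against; your direct verification is the standard one and the details check out: the Jacobiator of $\circ$ is indeed totally symmetric once commutativity is observed, the two pure patterns reduce to the Jacobi identities of $(A,\bullet)$ and $(H,\diamond)$, and in the mixed pattern $(x,y,c)$ the $H$-component is $\rho(x)\rho(y)c+\rho(y)\rho(x)c+\rho(x\bullet y)c$ while the $A$-component becomes the left-hand side of \eqref{matched2} after rewriting $y\bullet\mu(c)x$ as $\mu(c)x\bullet y$ (and dually for $(x,b,c)$ and \eqref{matched1}). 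The reduction to four homogeneous patterns via total symmetry of the Jacobiator is a clean way to organize the casework and gives both implications simultaneously, exactly as you say.
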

\begin{defi}
A \textbf{matched pair} of mock-Lie algebras is a quadruple $(A,H;\rho,\mu)$ consisting
of two mock-Lie algebras $(A,\bullet)$ and $(H,\diamond)$, together with representations $\rho:A\rightarrow End(H)$ and $\mu:H\rightarrow End(A)$ respectively, such that the compatibility conditions
\eqref{matched1} and \eqref{matched2} are satisfied.
\end{defi}
\begin{rmk}
We denote the mock-Lie algebra defined by Eq. \eqref{eqmatch} by  $A\bowtie H$. It
is straightforward to show that every  mock-Lie algebra which is a direct sum of the underlying vector spaces
of two  mock-Lie subalgebras can be obtained from a
matched pair of  mock-Lie algebras as above.
\end{rmk}
\begin{defi}\label{invariant}
A bilinear form $\omega$ on a mock-Lie algebra $(A,\bullet)$ is called \textbf{invariant} if it satisfies
\begin{equation}
    \omega (x\bullet y,z)=\omega(x,y\bullet z),\quad \forall x,y,z\in A.
\end{equation}
\end{defi}
\begin{pro}
Let $(A,\bullet)$ be a mock-Lie algebra and $(A,L)$ be the adjoint representation of $A$ on itself. Then $(A,L)$ and $(A^*,L^*)$ are equivalent as representations of the mock-Lie algebra $(A,\bullet)$ if and only if there exists a nondegenerate symmetric invariant bilinear form $\omega$ on $A$.
\end{pro}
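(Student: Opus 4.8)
The plan is to use the standard correspondence between bilinear forms on $A$ and linear maps $A\to A^{*}$: to a bilinear form $\omega$ attach $\phi_{\omega}:A\to A^{*}$ determined by $\langle\phi_{\omega}(x),y\rangle=\omega(x,y)$, and conversely recover $\omega_{\phi}(x,y):=\langle\phi(x),y\rangle$ from any $\phi:A\to A^{*}$. Since $\dim A<\infty$, $\omega$ is nondegenerate precisely when $\phi_{\omega}$ is bijective, so the whole statement reduces to matching ``$\phi$ is a morphism of representations from $(A,L)$ to $(A^{*},L^{*})$'' with ``$\omega_{\phi}$ is invariant'', and then taking care of symmetry.

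For $(\Leftarrow)$, assume $\omega$ is nondegenerate, symmetric and invariant, and set $\phi=\phi_{\omega}$, which is bijective. For all $x,y,z\in A$ one has $\langle\phi(L(x)y),z\rangle=\omega(x\bullet y,z)$ and $\langle L^{*}(x)\phi(y),z\rangle=\langle\phi(y),L(x)z\rangle=\omega(y,x\bullet z)$. Commutativity gives $\omega(x\bullet y,z)=\omega(y\bullet x,z)$, and invariance (Definition~\ref{invariant}) gives $\omega(y\bullet x,z)=\omega(y,x\bullet z)$; hence these two expressions coincide for every $z$, i.e. $\phi\circ L(x)=L^{*}(x)\circ\phi$. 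So $\phi$ is an isomorphism of representations and $(A,L)\cong(A^{*},L^{*})$. (Symmetry of $\omega$ is not actually used in this direction; it is supplied as data in the converse.)

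For $(\Rightarrow)$, let $T:A\to A^{*}$ be a bijective morphism of representations and put $\omega(x,y)=\langle T(x),y\rangle$, which is nondegenerate. Running the computation above in reverse, $\omega(x\bullet y,z)=\langle T(L(x)y),z\rangle=\langle L^{*}(x)T(y),z\rangle=\langle T(y),x\bullet z\rangle=\omega(y,x\bullet z)$, and applying commutativity in the first argument turns this into the invariance identity $\omega(x\bullet y,z)=\omega(x,y\bullet z)$.

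The step I expect to be the main obstacle is symmetry, because the form $\omega$ built from an arbitrary intertwiner $T$ need not be symmetric. I would first note that the transpose $\omega'(x,y):=\omega(y,x)$ is again invariant — either by checking this directly from invariance of $\omega$ and commutativity, or by observing that dualizing $T\circ L(x)=L^{*}(x)\circ T$ and using $(L(x))^{*}=L^{*}(x)$ together with the canonical isomorphism $A^{**}\cong A$ makes $T^{*}$ a morphism of representations with $\omega_{T^{*}}=\omega'$. Hence $\omega_{s}:=\tfrac12(\omega+\omega')$ is a symmetric invariant bilinear form, and what remains is to see that $\omega_{s}$ is still nondegenerate; the relevant tool is that the radical of any invariant form is an ideal of $A$, so a nonzero radical of $\omega_{s}$ would be an ideal on which $\omega$ agrees with its antisymmetric part, which one plays against bijectivity of $T$. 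In the situation where this Proposition is applied — the canonical pairing on a Manin-triple double $A\oplus A^{*}$ — the bilinear form is symmetric from the outset, so there one simply takes $\omega=\omega_{\phi}$ and this last difficulty does not arise. Either way one ends with a nondegenerate symmetric invariant bilinear form, completing the equivalence.
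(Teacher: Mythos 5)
Your $(\Leftarrow)$ direction is exactly the paper's argument: build $\phi$ from $\omega$ and check $\phi\circ L(x)=L^{*}(x)\circ\phi$ using commutativity and invariance (and you are right that symmetry of $\omega$ plays no role there). For the converse the paper only writes ``by a similar way, we can get the conclusion,'' so you have probed further than the printed proof and have put your finger on the real difficulty: the form $\omega(x,y)=\langle T(x),y\rangle$ attached to a bijective intertwiner $T$ is nondegenerate and invariant, but nothing forces it to be symmetric. (From $\omega(x\bullet y,z)=\omega(y,x\bullet z)$ and commutativity one gets that the trilinear form $\omega(x,y\bullet z)$ is totally symmetric, hence $\omega$ is symmetric on $A\times(A\bullet A)$; but mock-Lie algebras need not be perfect --- see the paper's four-dimensional example --- so this does not cover all of $A$.)

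However, your proposed repair is incomplete, and as sketched it does not close. The symmetrization step itself is fine: $T^{*}$ is again an intertwiner, so $\omega'$ is invariant and $\omega_{s}=\tfrac12(\omega+\omega')$ is symmetric and invariant. The gap is nondegeneracy of $\omega_{s}$. Knowing that the radical $I$ of $\omega_{s}$ is an ideal only tells you that $\omega$ restricted to $I\times A$ coincides with its antisymmetric part, and there is no visible contradiction with bijectivity of $T$: in the extreme case $\omega$ could a priori be entirely antisymmetric, making $\omega_{s}=0$ while $T$ remains bijective. So you would need an additional structural input ruling out (enough of) the antisymmetric part, or else the statement must be read, as you note, in the Manin-triple setting where the pairing $\omega_d$ is symmetric by construction. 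As it stands your converse --- like the paper's --- establishes only ``intertwiner $\Rightarrow$ nondegenerate invariant form,'' not ``$\Rightarrow$ nondegenerate \emph{symmetric} invariant form.''
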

\begin{proof}
Suppose that there exists a nondegenerate symmetric invariant bilinear form $\omega$ on $A$.
Since $\omega$ is nondegenerate, there exists a linear isomorphism $\phi:A \rightarrow A^*$ defined by 
\begin{equation*}
    \langle \phi(x),y\rangle=\omega (x,y),\quad \forall x,y\in A.
\end{equation*}
Hence for any $x,y,z\in A$, we have 
\begin{align*}
    &\langle \phi(L(x)(y)),z\rangle =\omega(L(x)(y),z)=\omega(x\bullet y,z)=\omega(y,x\bullet z)\\
    &\quad \quad \quad\quad \;\;\;\; \;\;\;\;\; =\langle \phi(y),x\bullet z\rangle=\langle L^*(x)\phi(y),z\rangle.
\end{align*}
That is, $(A,L)$ and $(A^*,L^*)$ are equivalent. Conversely, by a similar way, we can get the
conclusion.
\end{proof}

\begin{defi}
A \textbf{Manin triple} of mock-Lie algebras is a triple of mock-Lie algebras $(A,A^+,A^-)$ together with a nondegenerate symmetric invariant bilinear  form $\omega$ on $A$ such that the following conditions  are satisfied:\\
$(a)$ $A^+$, $A^-$ are mock-Lie subalgebras of $A$.\\
$(b)$ $A=A^+\oplus A^-$ as vector spaces.\\
$(c)$ $A^+$ and $A^-$ are isotropic with respect to $\omega$, that is, $\omega(x_+,y_+)=\omega(x_-,y_-)=0$, for any $x_+,y_+\in A^+$, $x_-,y_-\in A^-$.
\end{defi}
A homomorphism between two Manin triples of mock-Lie algebras  $(A,A^+,A^-)$ and\\ $(B,B^+,B^-)$ associated to two nondegenerate symmetric invariant bilinear forms $\omega_1$ and $\omega_2$ respectively, is a homomorphism of mock-Lie algebras $f:A \rightarrow B$ such that 
\begin{align*}
      f(A^+)\subset B^+,\quad f(A^-)\subset B^-,\quad \omega_1(x,y)=\omega_2(f(x),f(y)),\ \forall x,y \in A.
\end{align*}
If in addition, $f$ is an isomorphism of vector spaces, then the two Manin triples are called isomorphic.
\begin{defi}\cite{Haliya}
Let $(A,\bullet)$ be a mock-Lie algebra. Suppose that there is a mock-Lie algebra structure $(A^*,\diamond)$ on the dual space $A^*$ of $A$ and there is a mock-Lie algebra structure on the direct sum $A\oplus A^*$ of the underlying vector spaces $A$ and $A^*$ such that $(A,\bullet)$ and $(A^*,\diamond)$ are subalgebras and the natural non-degerenate symmetric bilinear form on $A\oplus A^*$ given by 
\begin{equation}\label{standard bilinear form}
    \omega_d(x+\xi,y+\eta):=\langle x,\eta\rangle +\langle \xi,y\rangle,\quad  \forall x,y\in A,\;\xi ,\eta\in A^*,
\end{equation}
is invariant, then $(A\oplus A^*,A,A^*)$ is called a standard Manin triple of mock-Lie algebra associated to standard bilinear form $\omega_d$.
\end{defi}
Obviously, a standard Manin triple of mock-Lie algebras is a Manin triple of mock-Lie 
algebras. Conversely, we have
\begin{pro}
Every Manin triple of mock-Lie algebras is isomorphic to a standard one.
\end{pro}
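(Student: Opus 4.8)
The plan is to realize the abstract Manin triple $(A,A^+,A^-)$ directly as a standard one built from $B:=A^+$ and its linear dual. The first thing I would establish is that $A^+$ (and likewise $A^-$) is a \emph{maximal isotropic} subspace, i.e.\ half-dimensional. Indeed, define $\varphi\colon A^-\to (A^+)^*$ by $\langle\varphi(a),x\rangle:=\omega(a,x)$ for $a\in A^-$, $x\in A^+$. If $\varphi(a)=0$ then $\omega(a,x)=0$ for all $x\in A^+$, while $\omega(a,a')=0$ for all $a'\in A^-$ by isotropy; since $A=A^+\oplus A^-$ and $\omega$ is nondegenerate, $a=0$, so $\varphi$ is injective and $\dim A^-\le\dim A^+$. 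The symmetric map $A^+\to(A^-)^*$ is injective for the same reason, giving $\dim A^+\le\dim A^-$. Hence $\dim A^+=\dim A^-$ and $\varphi$ is a linear isomorphism identifying $A^-$ with $(A^+)^*$.

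Next I would transport structure through $\varphi$. Since $A^-$ is a subalgebra of $A$, it carries the restriction of $\bullet$; define a product $\diamond$ on $(A^+)^*$ by $\varphi(a)\diamond\varphi(b):=\varphi(a\bullet b)$, so that $\varphi\colon(A^-,\bullet)\to((A^+)^*,\diamond)$ is an isomorphism of mock-Lie algebras. Now set $f\colon A\to A^+\oplus (A^+)^*$, $f(x+a):=x+\varphi(a)$ for $x\in A^+$, $a\in A^-$; this is a linear isomorphism that restricts to the identity on $A^+$ and to $\varphi$ on $A^-$. Pushing the mock-Lie multiplication of $A$ forward along $f$ yields a mock-Lie product $\circ$ on $A^+\oplus(A^+)^*$ for which $f$ is an isomorphism of mock-Lie algebras, and for which $A^+$ and $(A^+)^*$ are subalgebras, being the $f$-images of the subalgebras $A^+$ and $A^-$.

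Then I would check that $f$ intertwines $\omega$ with the canonical form. For $x,y\in A^+$ and $a,b\in A^-$, using symmetry of $\omega$ and isotropy of $A^\pm$,
\[
\omega_d\bigl(f(x+a),f(y+b)\bigr)=\langle x,\varphi(b)\rangle+\langle\varphi(a),y\rangle=\omega(x,b)+\omega(a,y)=\omega(x+a,y+b),
\]
so $f$ carries $\omega$ to the standard form $\omega_d$ of \eqref{standard bilinear form}. Since invariance of a bilinear form is preserved under algebra isomorphisms (compatible with the forms), $\omega_d$ is invariant for $\circ$. Therefore $(A^+\oplus(A^+)^*,\,A^+,\,(A^+)^*)$ equipped with $\omega_d$ is a standard Manin triple of mock-Lie algebras, and $f$ is an isomorphism of Manin triples with $f(A^+)=A^+$ and $f(A^-)=(A^+)^*$.

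The only real content is the dimension count in the first paragraph: deducing from "isotropic, complementary, and $\omega$ nondegenerate" that $A^\pm$ are Lagrangian, so that $\varphi$ is an isomorphism \emph{onto the full dual} $(A^+)^*$ rather than merely injective. Everything after that is a formal transport-of-structure argument. The one bookkeeping point to treat with care is verifying that the pushed-forward product $\circ$ has exactly the form demanded by the definition of a standard Manin triple, namely that $A^+$ and $(A^+)^*$ embed in it as subalgebras with the prescribed multiplications $\bullet$ and $\diamond$; this is immediate from the fact that $f$ is an algebra isomorphism restricting to the identity on $A^+$ and to $\varphi$ on $A^-$.
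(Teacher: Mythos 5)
Your proof is correct and follows essentially the same route as the paper: identify $A^-$ with $(A^+)^*$ via the nondegenerate form $\omega$, transport the mock-Lie structure and the bilinear form along this identification, and check that $\omega$ becomes the standard form $\omega_d$. You simply make explicit two points the paper leaves implicit, namely that isotropy plus nondegeneracy forces $A^\pm$ to be half-dimensional (so $\varphi$ is onto the full dual) and the verification that $f$ intertwines $\omega$ with $\omega_d$.
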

\begin{proof}
Since $A^+$ and $A^-$ are isotropic under the nondegenerate invariant bilinear form $\omega$ on $A^+\oplus A^-$, then in this case $A^-$ and $(A^+)^*$ are identified by $\omega$ and the mock-Lie algebra structure on $A^-$ is transferred to $(A^+)^*$.  Hence the mock-Lie algebra structure on $A^+\oplus A^-$ is transferred to $A^+\oplus (A^+)^*$. Transfer the nondegenrate bilinear form $\omega$ to $A^+\oplus (A^+)^*$, we obtain the standard bilinear form given by \eqref{standard bilinear form}. Thus, $(A,A^+,A^-)$ is isomorphic to the stansadrd Manin triple  $(A\oplus A^*,A,A^*)$.
\end{proof}
\begin{pro} \cite{Haliya}\label{matchmanin}
Let $(A,\bullet)$ be a mock-Lie algebra. Suppose that there is a mock-Lie algebra structure $(A^*,\diamond)$ on $A^*$. Then there exists a mock-Lie algebra sructure on the vector space $A\oplus A^*$ such that $(A\oplus A^*,A,A^*)$ is a standard Manin triple of mock-Lie algebras with respect to $\omega_d$ defined by \eqref{standard bilinear form} if and only if $(A,A^*;L^*,\mathcal{L}^*)$  is a matched pair of mock-Lie algebras. Here $\mathcal{L}^*$ is the coadjoint representation of the mock-Lie algebra $(A^*,\diamond)$.
\end{pro}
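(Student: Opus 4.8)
The plan is to reduce the statement to the matched pair theorem already proved above. Suppose first that $A\oplus A^*$ carries a mock-Lie multiplication $\circ$ for which $(A\oplus A^*,A,A^*)$ is a standard Manin triple with respect to $\omega_d$. Since $A$ and $A^*$ are subalgebras, writing $x\circ\eta=\mu(\eta)x+\rho(x)\eta$ for its $A$- and $A^*$-components (with $x\in A$, $\eta\in A^*$) and expanding by bilinearity shows that $\circ$ is exactly the bracket \eqref{eqmatch} attached to the linear maps $\rho:A\to End(A^*)$ and $\mu:A^*\to End(A)$; by the matched pair theorem above, $(A,A^*;\rho,\mu)$ is then automatically a matched pair. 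So the whole content of the proposition is to identify for which $(\rho,\mu)$ the form $\omega_d$ is invariant, and to check that this forces $\rho=L^*$ and $\mu=\mathcal{L}^*$, and conversely.

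To do that I would expand the invariance identity $\omega_d((x+\xi)\circ(y+\eta),z+\zeta)=\omega_d(x+\xi,(y+\eta)\circ(z+\zeta))$ using \eqref{eqmatch} and \eqref{standard bilinear form}. Each side yields six scalar terms, and each of the twelve terms depends on exactly three of the six arguments $x,y,z,\xi,\eta,\zeta$; the dependence patterns pair every left-hand term with a unique right-hand term, so zeroing out the complementary three arguments isolates each pair. Thus invariance of $\omega_d$ is equivalent to six scalar identities. Using commutativity of $\bullet$ and of $\diamond$ together with the duality convention \eqref{repdual}, two of these identities read $\langle x\bullet y,\zeta\rangle=\langle x,\rho(y)\zeta\rangle$ and $\langle\xi\diamond\eta,z\rangle=\langle\xi,\mu(\eta)z\rangle$, that is, $\rho=L^*$ and $\mu=\mathcal{L}^*$; substituting these back, the remaining four identities collapse to tautologies. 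Hence $\omega_d$ is invariant for $A\bowtie A^*$ precisely when $\rho=L^*$ and $\mu=\mathcal{L}^*$.

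Both implications then follow at once. For the forward direction, a standard Manin triple structure on $A\oplus A^*$ gives, through the two steps above, that $(A,A^*;L^*,\mathcal{L}^*)$ satisfies the compatibility conditions \eqref{matched1} and \eqref{matched2}, i.e. is a matched pair. Conversely, if $(A,A^*;L^*,\mathcal{L}^*)$ is a matched pair, then the matched pair theorem makes $A\bowtie A^*$ into a mock-Lie algebra in which $A$ and $A^*$ are subalgebras, the computation above shows that $\omega_d$ is invariant for it, and $\omega_d$ is visibly symmetric, nondegenerate, and isotropic on $A$ and on $A^*$; hence $(A\oplus A^*,A,A^*)$ is a standard Manin triple. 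I do not expect a conceptual obstacle: the point requiring care is the bookkeeping of the several pairings, in particular reading $\mathcal{L}^*$ as a map $A^*\to End(A)$ via the canonical identification $A^{**}\cong A$, and confirming that the six identities really reduce to the two equalities with no leftover constraint.
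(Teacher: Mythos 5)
Your argument is correct. Note that the paper itself gives no proof of this proposition (it is quoted from the reference [Haliya]), so there is nothing in-paper to compare against; your route --- decompose the cross products to see that any such structure on $A\oplus A^*$ is of the matched-pair form \eqref{eqmatch}, then expand the invariance of $\omega_d$ into six scalar identities of which two force $\rho=L^*$, $\mu=\mathcal{L}^*$ and the remaining four become tautologies by commutativity of $\bullet$ and $\diamond$ --- is the standard argument one would expect behind the citation, and your bookkeeping of the pairings (including reading $\mathcal{L}^*(\eta)$ as an endomorphism of $A\cong A^{**}$) checks out.
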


\begin{pro}\label{caracmatched}
Let $(A,\bullet)$ be a mock-Lie algebra. Suppose that there is a mock-Lie algebra structure  $(A^*,\diamond)$ on $A^*$. Then $(A,A^*;L^*,\mathcal{L}^*)$  is a matched pair of mock-Lie algebras if and only if for any $x,y\in A,\xi\in A^*$, we have
\begin{equation}\label{condmatched}
    \mathcal{L^*}(\xi)(x\bullet y)+(\mathcal{L^*}(\xi)(x))\bullet y+x\bullet (\mathcal{L^*}(\xi)(y))+ \mathcal{L^*}(L^*(x)(\xi))(y)+\mathcal{L^*}(L^*(y)(\xi))(x)=0.
\end{equation}
\end{pro}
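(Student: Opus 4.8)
The plan is to observe that \eqref{condmatched} is nothing but one of the two compatibility conditions of the prospective matched pair $(A,A^*;L^*,\mathcal{L}^*)$, and that the other condition is equivalent to it by a duality argument.

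First I would record that $(A^*,L^*)$ is a representation of $(A,\bullet)$ (the coadjoint representation, by the Corollary above), and that applying the same construction to the mock-Lie algebra $(A^*,\diamond)$ — identifying $(A^*)^*$ with $A$ — shows that $(A,\mathcal{L}^*)$ is a representation of $(A^*,\diamond)$, where $\langle\mathcal{L}^*(\xi)x,\eta\rangle=\langle x,\xi\diamond\eta\rangle$. Thus, by the definition of a matched pair together with the Theorem above, $(A,A^*;L^*,\mathcal{L}^*)$ is a matched pair precisely when conditions \eqref{matched1} and \eqref{matched2}, specialised to $\rho=L^*$, $\mu=\mathcal{L}^*$ and $H=A^*$, both hold. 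A direct substitution shows that \eqref{matched2} in this case is \emph{exactly} \eqref{condmatched}, whereas \eqref{matched1} becomes the identity in $A^*$, which I will call $(\ast)$,
\[
L^*(x)(\xi\diamond\eta)+L^*(x)\xi\diamond\eta+\xi\diamond L^*(x)\eta+L^*(\mathcal{L}^*(\xi)x)\eta+L^*(\mathcal{L}^*(\eta)x)\xi=0.
\]
Hence it remains to prove that $(\ast)$ holds for all $x\in A$ and $\xi,\eta\in A^*$ if and only if \eqref{condmatched} holds for all $x,y\in A$ and $\xi\in A^*$.

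For this I would pair the left-hand side of $(\ast)$ against an arbitrary $z\in A$ and rewrite the five resulting scalars using only the relations $\langle L^*(x)\xi,y\rangle=\langle\xi,x\bullet y\rangle$ and $\langle\mathcal{L}^*(\xi)v,\eta\rangle=\langle v,\xi\diamond\eta\rangle$, together with the commutativity of $\bullet$ and of $\diamond$. Each term can thereby be moved into the form $\langle\eta,\,\cdot\,\rangle$, and summing them one obtains $\langle\eta,\Phi(x,z,\xi)\rangle$, where $\Phi(x,z,\xi)$ is precisely the left-hand side of \eqref{condmatched} with $y$ replaced by $z$. Since the pairing between $A$ and $A^*$ is non-degenerate and $z\in A$, $\eta\in A^*$ are arbitrary, $(\ast)$ holds identically if and only if $\Phi\equiv 0$, i.e.\ if and only if \eqref{condmatched} holds; this gives both implications at once and completes the proof.

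The computational core of the argument is this term-by-term dualisation, and essentially the only place that requires care is the term $L^*(\mathcal{L}^*(\eta)x)\xi$: pairing it with $z$ first produces $\langle\xi,\mathcal{L}^*(\eta)x\bullet z\rangle$, in which $\xi$, not $\eta$, occupies the outer slot, so one must invoke commutativity of $\bullet$ to bring $z$ to the front and then commutativity of $\diamond$ before the adjoint-map relations can be used to extract $\eta$; the remaining four terms dualise directly. Everything else is routine bookkeeping with the identities recalled in Section~2.
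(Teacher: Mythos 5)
Your proposal is correct and takes essentially the same route as the paper: both arguments reduce the matched-pair condition to the single identity \eqref{condmatched} by showing that \eqref{matched1} and \eqref{matched2}, specialised to $\rho=L^*$ and $\mu=\mathcal{L}^*$, are dual to one another under the pairing between $A$ and $A^*$. The only cosmetic difference is the direction of the dualisation: the paper pairs the terms of \eqref{condmatched} against $\eta\in A^*$ to recover \eqref{matched1}, while you pair your identity $(\ast)$ against $z\in A$ to recover \eqref{condmatched}; this is the same term-by-term computation read backwards.
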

\begin{proof}
Obiviously, Eq.\eqref{condmatched} is exactly Eq. \eqref{matched2} in the case $\rho=L^*$, $\mu=\mathcal{L^*}$. In addition, for any $x,y\in A$, $\xi, \eta \in A^*$, we have
\begin{align*}
    &\langle \mathcal{L^*}(\xi)(x\bullet y),\eta\rangle =\langle x\bullet y,\mathcal{L}(\xi)(\eta)\rangle=\langle L(x)(y),\xi \diamond \eta\rangle=\langle y,L^*(x)(\xi \diamond \eta)\rangle;\\
    &\langle (\mathcal{L^*}(\xi)(x))\bullet y,\eta\rangle =\langle L(\mathcal{L^*}(\xi)(x))(y),\eta\rangle =\langle y,L^*(\mathcal{L^*}(\xi)(x))(\eta)\rangle;\\
    &\langle x\bullet (\mathcal{L^*}(\xi)(y)),\eta \rangle=\langle L(x)(\mathcal{L^*}(\xi)(y)),\eta\rangle=\langle \mathcal{L^*}(\xi)(y),L^*(x)(\eta)\rangle=\langle y,\mathcal{L}(\xi)(L^*(x)(\eta))\rangle\\
    &\quad \quad \quad \quad \quad \quad\quad \;\; =\langle y,\xi \diamond (L^*(x)(\eta))\rangle;\\
    &\langle \mathcal{L^*}(L^*(x)(\xi))(y),\eta\rangle=\langle y,\mathcal{L}(L^*(x)(\xi))(\eta)\rangle=\langle y,(L^*(x)(\xi))\diamond \eta \rangle;\\
    &\langle \mathcal{L^*}(L^*(y)(\xi))(x),\eta\rangle =\langle x,\mathcal{L}(L^*(y)(\xi))(\eta)\rangle=\langle x,\eta \diamond (L^*(y)(\xi))\rangle=\langle x,\mathcal{L}(\eta)((L^*(y)(\xi))\rangle\\
     &\quad \quad \quad \quad \quad \quad\quad \quad\;\; =\langle\mathcal{L^*}(\eta)(x),L^*(y)(\xi)\rangle=\langle L(\mathcal{L^*}(\eta)(x))(y),\xi\rangle=\langle y,L^*(\mathcal{L^*}(\eta)(x))(\xi)\rangle.
\end{align*}
Then Eq. \eqref{matched1} holds if and only if Eq. \eqref{matched2} holds. Therefore the conclusion holds.
\end{proof}

\begin{thm}\label{bialg}
Let $(A,\bullet)$ be a mock-Lie algebra. Suppose that there is a mock-Lie algebra structure $"\diamond"$ on its dual space $A^*$ given by a linear map $\Delta^*:A^*\otimes A^*\rightarrow A^*$, that is, $\xi\diamond \eta=\Delta^*(\xi \otimes \eta)$, for any $\xi,\eta\in A^*$. Then $(A,A^*;L^*,\mathcal{L}^*)$  is a matched pair of mock-Lie algebras if and only if $\Delta:A \rightarrow A\otimes A$ satisfies the following condition:
\begin{equation}\label{bialgebra}
    \Delta(x\bullet y)=-\big(L(x)\otimes id+id \otimes L(x)\big)\Delta(y)-\big(L(y)\otimes id+id \otimes L(y)\big)\Delta(x),
\end{equation}
for any $x,y\in A$.
\end{thm}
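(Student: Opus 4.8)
The plan is to reduce Theorem~\ref{bialg} to Proposition~\ref{caracmatched}, which already translates the matched-pair condition $(A,A^*;L^*,\mathcal{L}^*)$ into the single identity \eqref{condmatched} involving $\mathcal{L}^*$. So the real work is to show that \eqref{condmatched} (an identity in $\mathrm{End}(A^*)$, or equivalently an identity among elements of $A$ after pairing against an arbitrary $\xi\in A^*$) is equivalent to \eqref{bialgebra} (an identity in $A\otimes A$). The bridge between the two is the defining relation $\xi\diamond\eta=\Delta^*(\xi\otimes\eta)$, which dualizes to an expression of $\mathcal{L}^*(\xi)$ and $\Delta$ in terms of one another: for $x\in A$ and $\xi,\eta\in A^*$,
\begin{equation*}
\langle \mathcal{L}^*(\xi)(x),\eta\rangle=\langle x,\xi\diamond\eta\rangle=\langle x,\Delta^*(\xi\otimes\eta)\rangle=\langle \Delta(x),\xi\otimes\eta\rangle,
\end{equation*}
so $\mathcal{L}^*(\xi)(x)$ is nothing but the contraction of $\Delta(x)$ in its first (or second, using commutativity of $\diamond$, hence cocommutativity of $\Delta$) leg with $\xi$.

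First I would record this dictionary carefully, writing $\Delta(x)=\sum x_1\otimes x_2$ in Sweedler notation and noting $\mathcal{L}^*(\xi)(x)=\sum\langle\xi,x_1\rangle x_2=\sum\langle\xi,x_2\rangle x_1$. Then I would take each of the five terms on the left-hand side of \eqref{condmatched}, pair it against an arbitrary $\eta\in A^*$, and rewrite it using the dictionary together with the definitions of $L$, $L^*$, and $\mathcal{L}^*$. For instance, the first term $\mathcal{L}^*(\xi)(x\bullet y)$ paired with $\eta$ becomes $\langle\Delta(x\bullet y),\xi\otimes\eta\rangle$; the second term $(\mathcal{L}^*(\xi)(x))\bullet y$ paired with $\eta$ becomes $\langle \mathcal{L}^*(\xi)(x), L^*(y)(\eta)\rangle=\langle (id\otimes L(y))\Delta(x),\xi\otimes\eta\rangle$ after moving $L(y)$ across the pairing; and similarly for the remaining three, where the terms $\mathcal{L}^*(L^*(x)(\xi))(y)$ will produce the $(L(x)\otimes id)\Delta(y)$ pieces once one unwinds $L^*(x)(\xi)$ through the dictionary. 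Collecting, the vanishing of \eqref{condmatched} paired against every $\eta$ (and holding for every $\xi$) becomes exactly the statement that $\Delta(x\bullet y)+(L(x)\otimes id+id\otimes L(x))\Delta(y)+(L(y)\otimes id+id\otimes L(y))\Delta(x)$ has zero pairing with every $\xi\otimes\eta$, i.e. vanishes in $A\otimes A$, which is \eqref{bialgebra}.

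The main obstacle, and the step demanding care, is the bookkeeping of which leg of $\Delta$ gets contracted in each term and making sure the symmetrization is handled correctly: some terms of \eqref{condmatched} are genuinely symmetric in the two tensor legs only after invoking the commutativity of $\bullet$ and of $\diamond$ (equivalently the cocommutativity $\tau\circ\Delta=\Delta$ forced by $\xi\diamond\eta=\eta\diamond\xi$), and a sign or a transposition dropped here would give a spurious term. I would therefore verify cocommutativity of $\Delta$ at the outset (it follows immediately since $\Delta^*$ is commutative, being the multiplication of the mock-Lie algebra $(A^*,\diamond)$), and then use it freely to identify $\sum\langle\xi,x_1\rangle x_2$ with $\sum\langle\xi,x_2\rangle x_1$ whenever convenient. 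A secondary point worth stating explicitly is that the equivalence is genuinely an ``if and only if'': every manipulation above is reversible, since pairing against all $\xi,\eta\in A^*$ detects elements of $A\otimes A$ exactly (finite-dimensionality), so \eqref{bialgebra} conversely forces \eqref{condmatched}. Once the term-by-term translation is displayed, the theorem follows by combining it with Proposition~\ref{caracmatched}.
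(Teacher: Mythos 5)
Your proposal is correct and follows essentially the same route as the paper: the paper's proof likewise invokes Proposition~\ref{caracmatched} and then pairs each of the five terms of \eqref{condmatched} against an arbitrary $\eta\in A^*$, using $\langle \mathcal{L}^*(\xi)(x),\eta\rangle=\langle \Delta(x),\xi\otimes\eta\rangle$ to convert them into $\langle \Delta(x\bullet y),\xi\otimes\eta\rangle$, $\langle (id\otimes L(y))\Delta(x),\xi\otimes\eta\rangle$, etc., exactly as in your sample computations. Your additional remarks on cocommutativity of $\Delta$ and on the reversibility of the pairing argument are consistent with (and make explicit) what the paper uses implicitly.
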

\begin{proof}
Using Proposition \ref{caracmatched}, we can prove that Eq. \eqref{bialgebra} is equivalent to Eq. \eqref{condmatched}. In fact, for any $x,y\in A$, $\xi,\eta\in A^*$, we have 
\begin{align*}
    &\langle \mathcal{L^*}(\xi)(x\bullet y),\eta\rangle=\langle x\bullet y,\xi \diamond \eta\rangle =\langle x\bullet y,\Delta^*(\xi\otimes \eta)\rangle=\langle \Delta(x\bullet y),\xi \otimes \eta \rangle;\\
    &\langle (\mathcal{L^*}(\xi)(x))\bullet y,\eta \rangle =\langle L(y)(\mathcal{L^*}(\xi)(x)),\eta\rangle=\langle \mathcal{L^*}(\xi)(x),L^*(y)(\eta)\rangle =\langle x,\mathcal{L}(\xi)(L^*(y)(\eta))\rangle\\
    &\quad \quad \quad \quad \quad \quad \quad \;\;=\langle x,\xi \diamond (L^*(y)(\eta))\rangle =\langle (id \otimes L(y))\Delta(x),\xi \otimes \eta \rangle;\\
    &\langle x\bullet (\mathcal{L^*}(\xi)(y)),\eta\rangle =\langle L(x)(\mathcal{L^*}(\xi)(y)),\eta \rangle=\langle \mathcal{L^*}(\xi)(y),L^*(x)(\eta)\rangle =\langle y,\mathcal{L}(\xi)(L^*(x)(\eta))\rangle\\
    &\quad \quad \quad \quad \quad \quad \quad \;\;=\langle y,\xi \diamond (L^*(x)(\eta))\rangle=\langle (id \otimes L(x))\Delta(y),\xi \otimes \eta \rangle;\\
    &\langle \mathcal{L^*}(L^*(x)(\xi))(y),\eta\rangle =\langle y,(L^*(x)(\xi))\diamond \eta \rangle=\langle (L(x)\otimes id )\Delta(y),\xi \otimes \eta \rangle;\\
    &\langle \mathcal{L^*}(L^*(y)(\xi))(x),\eta \rangle =\langle x,(L^*(y)(\xi))\diamond \eta\rangle=\langle (L(y)\otimes id )\Delta(x),\xi \otimes \eta\rangle. 
    \end{align*}
    Then Eq. \eqref{condmatched} is equivalent to Eq. \eqref{bialgebra}. Hence the conclusion holds.
\end{proof}
\begin{rmk}\label{gamma}
From the symmetry of the mock-Lie algebras $(A,\bullet)$ and $(A^*,\diamond)$ in the standard Manin triple of mock-Lie algebras with respect to $\omega_d$, we also can consider a linear map $\gamma: A^*\rightarrow A^*\otimes A^*$ such that $\gamma^*:A \otimes A\rightarrow A$ gives the mock-Lie algebra structure $"\bullet"$ on $A$. It is straightforward to show that $\Delta$ satisfies Eq. \eqref{bialgebra} if and only if $\gamma$ satisfies 
\begin{equation}\label{bialgebra2}
    \gamma(\xi\diamond \eta)=-\big(\mathcal{L}(\xi)\otimes id+id \otimes \mathcal{L}(\xi)\big)\gamma(\eta)-\big(\mathcal{L}(\eta)\otimes id+id \otimes \mathcal{L}(\eta)\big)\gamma(\xi),
\end{equation}
for any $\xi,\eta\in A^*$.
\end{rmk}
\begin{defi}\label{definition1}
Let $(A,\bullet)$ be a mock-Lie algebra. A \textbf{mock-Lie bialgebra} structure on $A$ is a symmetric linear map $\Delta:A \rightarrow A\otimes A$ such that 
\begin{enumerate}
    \item $\Delta^*:A^*\otimes A^*\rightarrow A^*$ defines a mock-Lie algebra structure on $A^*$;
    \item $\Delta$ satifies Eq. \eqref{bialgebra}, called the compatibility condition.
\end{enumerate}
We denote it by $(A,\Delta)$ or $(A,A^*)$.
\end{defi}

We can unwrap the compatibility condition Eq. \eqref{bialgebra} as 
\begin{equation}
    \Delta(x\bullet y)=-(x\bullet y_1)\otimes y_2-y_1\otimes (x\bullet y_2)-(y\bullet x_1)\otimes x_2-x_1\otimes (y\bullet x_2).
\end{equation}
\begin{rmk}
The compatibility condition Eq. \eqref{bialgebra} is, in fact, a cocycle condition in the zigzag cohomology of mock-Lie algebra  introduced in \cite{Benayadi}. Indeed, we can regard $A^{\otimes2}$ as an $A$-module via the adjoint action \eqref{adjointaction}:
\begin{equation}
    x\c(y_1\otimes y_2)=L(x)(y_1\otimes y_2)=(x\bullet y_1)\otimes y_2+y_1\otimes (x\bullet y_2),
\end{equation}
for $x\in A$ and $y_1\otimes y_2\in A^{\otimes2}$. Then we can think of the linear map $\Delta:A \rightarrow A^{\otimes2}$ as a $1$-cochain. Then the differential on $\Delta$ is given by 
\begin{align*}
    &d^1\Delta(x,y)=\Delta(x\bullet y)+x\c \Delta(y)+y\c \Delta(x)\\
    &\quad \quad \quad \;\;\;\; =\Delta(x\bullet y)+L(x)(\Delta(y))+L(y)(\Delta(x)).
\end{align*}
Therefore, Eq. \eqref{bialgebra} says exactly that $\Delta\in C^1(A,A^{\otimes2})$ is a  $1$-cocycle.
\end{rmk}
\begin{ex}
Let $(A,A^*)$ be a mock-Lie bialgebra on a mock-Lie algebra $(A,\bullet)$. Then $(A^*,\gamma)(or (A^*,A))$ is a mock-Lie bialgebra on the mock-Lie algebra $(A^*,\diamond)$, where $\gamma$ is given in Remark \ref{gamma}.
\end{ex}

\begin{defi}
Let $(A_1,A^*_1)$ and $(A_2,A^*_2)$ be two mock-Lie bialgebras. A linear map $\psi:A_1\rightarrow A_2$ is a \textbf{homomorphism} of mock-Lie bialgebras if $\psi $ satifies, for any $x,y\in A_1$
\begin{equation}
    \psi(x\bullet_1 y)=\psi(x)\bullet_2 \psi(y),\quad \quad (\psi\otimes \psi)\circ \Delta_1=\Delta_2\circ \psi.
\end{equation}
\end{defi}
Now, combining Proposition \ref{matchmanin} and Theorem \ref{bialg}, we have the following conclusion.
\begin{thm}
Let $(A,\bullet)$ be a mock-Lie algebra. Suppose that there is a mock-Lie algebra
structure on $A^*$ denoted by $"\diamond"$ which is defined as a linear map $\Delta : A \rightarrow A\otimes A$. Then
the following conditions are equivalent:
\begin{enumerate}
    \item $(A\oplus A^*,A,A^*)$ is a standard  Manin triple of mock-Lie algebras with respect to $\omega_d$ defined by Eq. \eqref{standard bilinear form}.
    \item $(A,A^*;L^*,\mathcal{L^*})$ is a matched pair of mock-Lie algebras.
    \item $(A,A^*)$ is a mock-Lie bialgebra.
\end{enumerate}
\end{thm}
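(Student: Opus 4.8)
The plan is to deduce the statement by chaining together the two structural results already established, namely Proposition~\ref{matchmanin} and Theorem~\ref{bialg}, after observing that the standing hypothesis of the theorem already packages part of the definition of a mock-Lie bialgebra. Indeed, the hypothesis ``there is a mock-Lie algebra structure on $A^*$ denoted by $\diamond$, defined by the linear map $\Delta$'' says precisely that $\Delta^*:A^*\otimes A^*\to A^*$, $\Delta^*(\xi\otimes\eta)=\xi\diamond\eta$, endows $A^*$ with a mock-Lie algebra structure. In particular $\diamond$ is commutative, which dualizes to $\tau\circ\Delta=\Delta$, so $\Delta$ is automatically a symmetric linear map. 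Thus condition~1 of Definition~\ref{definition1} and the symmetry requirement on $\Delta$ are part of the ambient setup, and asserting (3) is, under this hypothesis, equivalent to asserting that $\Delta$ satisfies the compatibility condition~\eqref{bialgebra}.

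Next I would record the equivalence $(1)\Leftrightarrow(2)$: this is exactly the content of Proposition~\ref{matchmanin}, applied verbatim to the mock-Lie algebra $(A,\bullet)$ together with the mock-Lie algebra structure $(A^*,\diamond)$ coming from $\Delta$. No extra work is needed here beyond noting that the representations appearing in the matched pair of that proposition, $L^*$ on $H=A^*$ and $\mathcal L^*$ on $A$, are precisely the coadjoint representations named in the statement.

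Then I would establish $(2)\Leftrightarrow(3)$. By Theorem~\ref{bialg}, $(A,A^*;L^*,\mathcal L^*)$ is a matched pair of mock-Lie algebras if and only if $\Delta$ satisfies~\eqref{bialgebra}. By the first paragraph, under the standing hypothesis the latter condition is exactly the statement that $(A,A^*)$ is a mock-Lie bialgebra. Combining this with the equivalence of the previous paragraph gives $(1)\Leftrightarrow(2)\Leftrightarrow(3)$, as required.

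There is no genuine obstacle in this argument: the substantive analytic content has already been discharged in Proposition~\ref{matchmanin} (the translation between Manin triples and matched pairs, which rests on unwinding the invariance of $\omega_d$ under the bracket of $A\oplus A^*$) and in Theorem~\ref{bialg} (the dualization turning the matched-pair compatibility~\eqref{condmatched} into the cocycle identity~\eqref{bialgebra}). The only point requiring care is bookkeeping consistency: one must check that the bracket ``$\diamond$'' on $A^*$, the map $\Delta$, and the map $\Delta^*$ are used coherently across all three statements --- i.e.\ that $\xi\diamond\eta=\Delta^*(\xi\otimes\eta)$ throughout --- and that the coadjoint representations $L^*$, $\mathcal L^*$ are indeed the ones feeding into both Proposition~\ref{matchmanin} and Theorem~\ref{bialg}. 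Once this is verified, the equivalences compose without further computation. $\qed$
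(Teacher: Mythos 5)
Your proposal is correct and follows exactly the paper's route: the paper proves this theorem by simply combining Proposition~\ref{matchmanin} (for $(1)\Leftrightarrow(2)$) with Theorem~\ref{bialg} (for $(2)\Leftrightarrow(3)$). Your additional remark that the standing hypothesis already supplies the mock-Lie structure on $A^*$ and the symmetry of $\Delta$, so that (3) reduces to the compatibility condition~\eqref{bialgebra}, is a useful clarification the paper leaves implicit.
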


\section{Coboundary mock-Lie bialgebras and the mock-Lie Yang-Baxter equation  }
In this section, we consider a special class of mock-Lie bialgebras called coboundary mock-Lie bialgebras and introduce the notion of mock-Lie Yang-Baxter equation.
\begin{defi}
A mock-Lie bialgebra $(A,A^*)$ is called \textbf{coboundary} if there exists an element $r\in A\otimes A$ such that for any $x\in A$, 
\begin{equation}\label{coboundary}
    \Delta(x)=\big(L(x)\otimes id -id \otimes L(x)\big)r.
\end{equation}
\end{defi}
\begin{lem}
Let $(A,\bullet)$ be a mock-Lie algebra and $r\in A\otimes A$. Suppose that the linear map $\Delta:A\rightarrow A\otimes A$ is defined by Eq. \eqref{coboundary}. Then $\Delta$ satisfies the compatibility condition given by Eq. \eqref{bialgebra}. 
\end{lem}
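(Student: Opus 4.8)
The plan is to prove the lemma by a direct computation that rests on exactly two structural facts: that the left multiplication $L$ is an anti-derivation, so $L(x\bullet y)=-L(x)L(y)-L(y)L(x)$, and that an operator acting on the first tensor leg of $A\otimes A$ commutes with an operator acting on the second leg. No cohomological input beyond the Remark is really needed; in particular one should \emph{not} try to argue ``a coboundary is automatically a cocycle'', because the module action in the Remark carries a $+$ between the two legs while \eqref{coboundary} carries a $-$, so the cancellation must be checked by hand.

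First I would set $\ell_1(x)=L(x)\otimes id$ and $\ell_2(x)=id\otimes L(x)$ as operators on $A\otimes A$, so that \eqref{coboundary} reads $\Delta(x)=(\ell_1(x)-\ell_2(x))r$ while the $A$-module structure on $A^{\otimes 2}$ from the Remark is $L(x)=\ell_1(x)+\ell_2(x)$. Applying the anti-derivation identity for $L$ in each leg separately gives $\ell_i(x\bullet y)=-\ell_i(x)\ell_i(y)-\ell_i(y)\ell_i(x)$ for $i=1,2$, and since $\ell_1$ and $\ell_2$ act on disjoint tensor factors we have $\ell_1(u)\ell_2(v)=\ell_2(v)\ell_1(u)$ for all $u,v\in A$. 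These are the only ingredients.

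Then I would expand the left-hand side of \eqref{bialgebra},
\[
\Delta(x\bullet y)=\bigl(\ell_1(x\bullet y)-\ell_2(x\bullet y)\bigr)r=\bigl(-\ell_1(x)\ell_1(y)-\ell_1(y)\ell_1(x)+\ell_2(x)\ell_2(y)+\ell_2(y)\ell_2(x)\bigr)r,
\]
and independently expand the right-hand side $-\bigl(\ell_1(x)+\ell_2(x)\bigr)\Delta(y)-\bigl(\ell_1(y)+\ell_2(y)\bigr)\Delta(x)$ by substituting $\Delta(y)=(\ell_1(y)-\ell_2(y))r$ and $\Delta(x)=(\ell_1(x)-\ell_2(x))r$. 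This produces eight terms; the four ``mixed'' ones (one $\ell_1$ and one $\ell_2$) cancel in pairs by the commutation $\ell_1(u)\ell_2(v)=\ell_2(v)\ell_1(u)$, and the surviving four terms are precisely those in the displayed expansion of $\Delta(x\bullet y)$. Hence the two sides coincide, which is \eqref{bialgebra}.

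I do not anticipate a real obstacle; the only delicate point is the bookkeeping of signs, because of the aforementioned mismatch between the $+$ in the module action and the $-$ in the coboundary formula. If a coordinate version is preferred, one writes $r=\sum_i a_i\otimes b_i$ and runs the identical computation summand by summand, using $x\bullet(y\bullet z)=-(x\bullet y)\bullet z-y\bullet(x\bullet z)$ in place of the operator anti-derivation identity; the argument is the same.
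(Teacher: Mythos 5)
Your proof is correct and is essentially the same argument as the paper's: the paper expands the right-hand side of \eqref{bialgebra} with $r=r_1\otimes r_2$ written out, cancels the four mixed terms (which is exactly your commutation of operators acting on different tensor legs), and then applies the Jacobi identity in the form $x\bullet(y\bullet r_k)+y\bullet(x\bullet r_k)=-(x\bullet y)\bullet r_k$ to recognize $\Delta(x\bullet y)$. Your $\ell_1,\ell_2$ operator notation is just a cleaner packaging of the identical computation, and your caution about the sign mismatch between the module action and the coboundary formula is well placed.
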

\begin{proof}
Let $r=r_1\otimes r_2\in A\otimes A$. Using the commutativity and Jacobi identity for mock-Lie algebras. Then for any $x,y\in A$, we have 
\begin{align*}
    &-\big(L(x)\otimes id+id \otimes L(x)\big)\Delta(y)-\big(L(y)\otimes id+id \otimes L(y)\big)\Delta(x)\\
    =&-\big(L(x)\otimes id+id\otimes L(x)\big)\big(L(y)\otimes id -id\otimes L(y)\big)r\\
    &-\big(L(y)\otimes id+id\otimes L(y)\big)\big(L(x)\otimes id -id\otimes L(x)\big)r\\
    =&-\big(L(x)\otimes id+id\otimes L(x)\big)\big((y\bullet r_1)\otimes r_2-r_1\otimes (y\bullet r_2)\big)\\
    & -\big(L(y)\otimes id+id\otimes L(y)\big)\big((x\bullet r_1)\otimes r_2-r_1\otimes (x\bullet r_2)\big)\\
    =&-\Big(x\bullet (y\bullet r_1)\otimes r_2-(x\bullet r_1)\otimes (y\bullet r_2)+(y\bullet r_1)\otimes (x\bullet r_2)-r_1\otimes (x\bullet (y\bullet r_2))\Big)\\
    &-\Big(y\bullet (x\bullet r_1)\otimes r_2-(y\bullet r_1)\otimes (x\bullet r_2)+(x\bullet r_1)\otimes (y\bullet r_2)-r_1\otimes (y\bullet (x\bullet r_2))\Big)\\
    =&\big(-x\bullet (y\bullet r_1)-y\bullet (x\bullet r_1)\big)\otimes r_2+r_1\otimes \big(x\bullet (y\bullet r_2)+y\bullet (x\bullet r_2)\big)\\
    =&\big((x\bullet y)\bullet r_1\big)\otimes r_2-r_1\otimes \big((x\bullet y)\bullet r_2\big)
    =\big(L(x\bullet y)\otimes id -id \otimes L(x\bullet y)\big)r\\
    =&\Delta(x\bullet y).
\end{align*}
Hence the proof.
\end{proof}
Let $\Delta:A\rightarrow A\otimes A$ be a linear map and $\sigma:A^{\otimes3}\rightarrow A^{\otimes3}$ be defined as $\sigma(x\otimes y\otimes z)=y\otimes z \otimes x$, for any $x,y,z\in A$. Let $E_{\Delta}:A \rightarrow A^{\otimes3}$ be a linear map given by
\begin{equation}
   E_{\Delta}(x)=(id+\sigma+\sigma^2)\big((id\otimes \Delta)\Delta(x)\big). 
\end{equation}
\begin{lem}\label{lemme1}
Let $A$ be a vector space and $\Delta:A \rightarrow A \otimes A$ be a linear map. Then the product  $"\diamond"$ in $A^*$ given by $\Delta^*:A^*\otimes A^*\rightarrow A^*$ satisfies the Jacobi identity if and only if $E_{\Delta}=0$.
\end{lem}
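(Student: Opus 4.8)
The plan is to compute the Jacobi identity for $\diamond$ on $A^*$ by pairing it with an arbitrary vector $x\in A$ and to recognize the result as the pairing of $\xi\otimes\eta\otimes\zeta$ against $E_{\Delta}(x)$. Since $A$ is finite dimensional we identify $(A\otimes A)^*$ with $A^*\otimes A^*$ and $(A^{\otimes3})^*$ with $(A^*)^{\otimes3}$, so the definition $\xi\diamond\eta=\Delta^*(\xi\otimes\eta)$ reads $\langle\xi\diamond\eta,z\rangle=\langle\xi\otimes\eta,\Delta(z)\rangle$ for all $z\in A$. Iterating this (first peel off $\xi$, then apply the same relation to $\eta\diamond\zeta$ paired with $x_2$, writing $\Delta(x)=x_1\otimes x_2$) gives, for all $\xi,\eta,\zeta\in A^*$ and $x\in A$,
\begin{equation*}
\langle\xi\diamond(\eta\diamond\zeta),x\rangle=\langle\xi\otimes\eta\otimes\zeta,(id\otimes\Delta)\Delta(x)\rangle .
\end{equation*}

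Next I would treat the other two terms of the cyclic sum $\xi\diamond(\eta\diamond\zeta)+\eta\diamond(\zeta\diamond\xi)+\zeta\diamond(\xi\diamond\eta)$. Applying the displayed identity with $(\xi,\eta,\zeta)$ cyclically permuted and then transporting the permutation of the covectors over to a permutation of the tensor factors on the $A$-side, one finds
\begin{equation*}
\langle\eta\diamond(\zeta\diamond\xi),x\rangle=\langle\xi\otimes\eta\otimes\zeta,\sigma^2(id\otimes\Delta)\Delta(x)\rangle,\qquad
\langle\zeta\diamond(\xi\diamond\eta),x\rangle=\langle\xi\otimes\eta\otimes\zeta,\sigma(id\otimes\Delta)\Delta(x)\rangle ,
\end{equation*}
where $\sigma$ is the cyclic permutation of $A^{\otimes3}$ from the statement; the point is only that, under the above pairing, the dual of the shift $\xi\otimes\eta\otimes\zeta\mapsto\eta\otimes\zeta\otimes\xi$ is $\sigma^2$ and the dual of $\xi\otimes\eta\otimes\zeta\mapsto\zeta\otimes\xi\otimes\eta$ is $\sigma$ (equivalently, $\sigma^*=\sigma^{-1}=\sigma^2$ since $\sigma^3=id$). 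Adding the three identities yields
\begin{equation*}
\big\langle\,\xi\diamond(\eta\diamond\zeta)+\eta\diamond(\zeta\diamond\xi)+\zeta\diamond(\xi\diamond\eta)\,,\,x\,\big\rangle=\big\langle\xi\otimes\eta\otimes\zeta,\,(id+\sigma+\sigma^2)\big((id\otimes\Delta)\Delta(x)\big)\big\rangle=\langle\xi\otimes\eta\otimes\zeta,E_{\Delta}(x)\rangle .
\end{equation*}

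Finally, since the pairing between $A^{\otimes3}$ and $(A^*)^{\otimes3}$ is nondegenerate, the left-hand side vanishes for all $\xi,\eta,\zeta\in A^*$ if and only if $E_{\Delta}(x)=0$; letting $x$ range over $A$, the product $\diamond$ satisfies the Jacobi identity if and only if $E_{\Delta}=0$. I expect the only delicate point to be the middle step — keeping straight which power of $\sigma$ is dual to each cyclic shift of the covectors — but this is a routine bookkeeping check with the pairing conventions and no structural obstacle arises.
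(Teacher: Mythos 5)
Your proof is correct and follows essentially the same route as the paper: both express the cyclic Jacobi sum for $\diamond$ via the pairing $\langle\xi\diamond(\eta\diamond\zeta),x\rangle=\langle\xi\otimes\eta\otimes\zeta,(id\otimes\Delta)\Delta(x)\rangle$, use that the dual of the cyclic shift on covectors is the appropriate power of $\sigma$ (so the full sum dualizes to $id+\sigma+\sigma^2$), and conclude by nondegeneracy of the pairing. Your explicit bookkeeping of which power of $\sigma$ is adjoint to which shift is the only point the paper glosses over, and it checks out.
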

\begin{proof}
For any $\xi,\eta\in A^*$, $x\in A$, we have
\begin{equation}\label{Delta}
    \langle \xi\diamond \eta,x\rangle =\langle \Delta^*(\xi \otimes \eta),x\rangle =\langle \xi \otimes \eta,\Delta(x)\rangle.
\end{equation}
Threrefore, for any $\xi,\eta,\nu\in A^*$ and $x\in A$, the Jacobi identity satisfies
\begin{align*}
    &\langle J(\xi ,\eta, \nu),x\rangle\\
    =&\langle \Delta^*(id\otimes \Delta^*)(\xi\otimes \eta\otimes \nu)+\Delta^*(id\otimes \Delta^*)(\eta\otimes \nu\otimes \xi)+\Delta^*(id\otimes \Delta^*)(\nu\otimes \xi\otimes \eta),x\rangle\\
    =&\langle \Delta^*(id\otimes \Delta^*)\big(id+\sigma+\sigma^2\big)(\xi\otimes \eta\otimes \nu),x\rangle\\
    =&\langle\xi\otimes \eta\otimes \nu , \big(id+\sigma+\sigma^2\big)((id\otimes \Delta)\Delta)(x) \rangle.
\end{align*}
Therefore $J(\xi ,\eta, \nu)=0$, for any $\xi,\eta,\nu\in A^*$ if and only if $E_{\Delta}=0$.
\end{proof}
Let $(A,\bullet)$ be a mock-Lie algebra and $r=\sum_{i}a_i\otimes b_i\in A\otimes A$. Set 
\begin{equation}
    r_{12}=\sum_{i}a_i\otimes b_i\otimes 1,\quad r_{13}=\sum_{i}a_i\otimes 1 \otimes b_i,\quad r_{23}=\sum_{i}1\otimes a_i\otimes b_i,
\end{equation}
where $1$ is a unit element if $(A,\bullet)$  is unital or a symbol playing a similar role of the unit for the non-unital cases. The operation between two $r_{ij}$ is in obvious way. For example,
{\small\begin{align}
    &r_{12}\bullet r_{13}=\sum_{ij}a_i\bullet a_j\otimes b_i\otimes b_j,\ \ r_{13}\bullet r_{23}=\sum_{ij}a_i\otimes a_j\otimes b_i\bullet b_j,\ \  r_{23}\bullet r_{12}=\sum_{ij}a_j\otimes a_i\bullet b_j\otimes b_i.
\end{align}}

Note that the above elements are independents of the existence of the unit.
A tensor $r\in A\otimes A$ is called symmetric (resp. skew-symmetric) if $r=\tau(r)$ ( resp. $r=-\tau(r)$). On the other hand, any $r\in A\otimes A$ can be identified as a linear map from the dual space $A^*$ to $A$ in the following way:
\begin{equation}
    \langle \xi,r(\eta)\rangle =\langle \xi \otimes \eta,r\rangle, \quad \forall \xi,\eta\in A^*.
\end{equation}
The tensor $r\in A\otimes A$ is called nondegenerate if the above induced linear map is invertible.
\begin{pro}\label{proposition1}
Let $(A,\bullet)$ be a mock-Lie algebra. Define a linear map $\Delta:A\rightarrow A\otimes A$ by Eq. \eqref{coboundary} with some $r\in A\otimes A$ satisfying 
\begin{equation}\label{SymmCobracket}
    \big(L(x)\otimes id -id \otimes L(x)\big)\big(r+\tau(r)\big)=0,
\end{equation}
for all $x\in A$. Then
\begin{equation}
    E_{\Delta}(x)+Q(x)[[r,r]]=0,
\end{equation}
where \begin{equation}\label{mockYBE}
    [[r,r]]=r_{12}\bullet r_{13}+r_{13}\bullet r_{23}-r_{12}\bullet r_{23},
\end{equation}
and $Q(x)=\big(L(x)\otimes id \otimes id +id\otimes L(x)\otimes id +id\otimes id \otimes L(x)\big)$ for any $x\in A$.
\end{pro}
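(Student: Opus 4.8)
The plan is a direct computation. Fix $x\in A$, write $r=\sum_i a_i\otimes b_i$, and keep a second bookkeeping copy $r=\sum_j a_j\otimes b_j$ for compositions. From Eq.~\eqref{coboundary} one has $\Delta(z)=\sum_i (z\bullet a_i)\otimes b_i-\sum_i a_i\otimes(z\bullet b_i)$, and the hypothesis \eqref{SymmCobracket} is the identity in $A\otimes A$, valid for every $x\in A$,
\begin{align*}
(L(x)\otimes id)\big(r+\tau(r)\big)&=(id\otimes L(x))\big(r+\tau(r)\big),\quad\text{equivalently}\\
\sum_i (x\bullet a_i)\otimes b_i+(x\bullet b_i)\otimes a_i&=\sum_i a_i\otimes(x\bullet b_i)+b_i\otimes(x\bullet a_i).
\end{align*}
This is the only way the non-skew part of $r$ will enter: it lets us interchange the two legs of the symmetric piece $r+\tau(r)$ at no cost.

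First I would compute $(id\otimes\Delta)\Delta(x)$ by applying $\Delta$ in the last leg, which produces the four families
\begin{align*}
(id\otimes\Delta)\Delta(x)=\sum_{i,j}\Big(&(x\bullet a_i)\otimes(b_i\bullet a_j)\otimes b_j-(x\bullet a_i)\otimes a_j\otimes(b_i\bullet b_j)\\
&{}-a_i\otimes\big((x\bullet b_i)\bullet a_j\big)\otimes b_j+a_i\otimes a_j\otimes\big((x\bullet b_i)\bullet b_j\big)\Big),
\end{align*}
and then apply $id+\sigma+\sigma^2$ to present $E_{\Delta}(x)$ as twelve explicit terms in $A^{\otimes3}$. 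In parallel I would expand $Q(x)[[r,r]]$ into nine terms, using $r_{12}\bullet r_{13}=\sum_{i,j}(a_i\bullet a_j)\otimes b_i\otimes b_j$, $r_{13}\bullet r_{23}=\sum_{i,j}a_i\otimes a_j\otimes(b_i\bullet b_j)$ and $r_{12}\bullet r_{23}=\sum_{i,j}a_i\otimes(b_i\bullet a_j)\otimes b_j$, and distributing the three summands of $Q(x)$ over the three summands of $[[r,r]]$ (Eq.~\eqref{mockYBE}). Everything is now a $\mathbb{K}$-linear combination of elements of $A^{\otimes3}$ doubly indexed by $i,j$.

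The heart of the proof is matching the twenty-one terms. I would group them into a handful of packets. In each packet one first uses the Jacobi identity in its anti-derivation form $u\bullet(v\bullet w)=-(u\bullet v)\bullet w-v\bullet(u\bullet w)$ to collapse the three nested-product terms on a common triple $\{x,b_i,a_j\}$ or $\{x,b_i,b_j\}$; this is exactly what forces the combination $r_{12}\bullet r_{13}+r_{13}\bullet r_{23}-r_{12}\bullet r_{23}$ upon us. Each such collapse leaves a single surviving term in which $x$ acts on one leg of a copy of $r$, tensored with a spectator leg of the form $a_i$ or $b_i$; applying $(L(b_i)\otimes id)$ or $(id\otimes L(b_i))$ to the rearranged hypothesis above and then tensoring with that spectator converts the survivor into precisely the term needed to cancel the remaining $\sigma$- and $\sigma^2$-rotated pieces of $E_{\Delta}(x)$. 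Commutativity and relabeling $i\leftrightarrow j$ are used freely. Summing over all packets yields $E_{\Delta}(x)+Q(x)[[r,r]]=0$.

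The main obstacle is purely organizational: one must track the three cyclic rotations $\sigma^k$ on $A^{\otimes3}$, the two independent summation indices with their relabeling, and the several sign sources (the minus signs in $\Delta$, in $[[r,r]]$, and in the anti-derivation form of Jacobi) without error. The delicate point is that naive term-by-term cancellation fails: a term such as $\sum_{i,j}a_i\otimes a_j\otimes\big((x\bullet b_i)\bullet b_j\big)$ from $E_{\Delta}(x)$ disappears only after being merged with its partner from $Q(x)\big(r_{13}\bullet r_{23}\big)$ via Jacobi and the survivor rewritten through \eqref{SymmCobracket}; choosing the right packets is the real work. As a sanity check, when $r$ is skew-symmetric the hypothesis \eqref{SymmCobracket} holds automatically (it reduces to $\tau(r)=-r$) and the statement specializes to the mock-Lie analogue of the classical identity relating $E_{\Delta}$ to the Yang--Baxter expression; the general case is the expected invariance-corrected version, and the computation follows the same template as for Lie, pre-Lie, and associative bialgebras.
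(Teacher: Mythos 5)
Your proposal takes exactly the same route as the paper's proof: expand $E_{\Delta}(x)$ from Eq.~\eqref{coboundary} into twelve terms and $Q(x)[[r,r]]$ into nine, then cancel using the anti-derivation form of the Jacobi identity, commutativity, the relabeling $i\leftrightarrow j$, and the hypothesis \eqref{SymmCobracket} rewritten as $\sum_i\big((x\bullet a_i)\otimes b_i-a_i\otimes(x\bullet b_i)+(x\bullet b_i)\otimes a_i-b_i\otimes(x\bullet a_i)\big)=0$ (the paper's Eq.~\eqref{cond}). Your preliminary formulas --- the expansion of $(id\otimes\Delta)\Delta(x)$, the three products $r_{12}\bullet r_{13}$, $r_{13}\bullet r_{23}$, $r_{12}\bullet r_{23}$ --- are all correct, and the sample mechanism you describe (merging $a_i\otimes a_j\otimes\big((x\bullet b_i)\bullet b_j\big)$ with the matching term of $Q(x)\big(r_{13}\bullet r_{23}\big)$ via Jacobi, then disposing of the survivor through \eqref{SymmCobracket}) is precisely what the paper does.

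The gap is that for a purely computational statement the cancellation \emph{is} the proof, and you only assert that ``summing over all packets'' gives zero without exhibiting the packets. The paper first notes that two terms cancel outright (leaving seventeen terms, not twenty-one), then reorganizes the remainder into the form $\sum_i\big(a_i\otimes U_i+b_i\otimes V_i\big)$ and verifies $U_i=V_i=0$ term by term; the final step of killing $U_i$ requires applying \eqref{cond} with $x$ replaced by $x\bullet b_i$, which is not an obvious move and is exactly the kind of place where an unexecuted plan can go wrong (the signs in \eqref{mockYBE} and the choice of which leg of \eqref{cond} to act on are the delicate points). Until you carry out an equivalent bookkeeping, the identity $E_{\Delta}(x)+Q(x)[[r,r]]=0$ remains asserted rather than proved.
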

\begin{proof}
Let $r=\sum_ia_i\otimes b_i$, the condition \eqref{SymmCobracket} is equivalent to
\begin{equation}\label{cond}
  \sum_i(x\bullet a_i)\otimes b_i-a_i\otimes (x\bullet b_i)+(x\bullet b_i)\otimes a_i-b_i\otimes (x\bullet a_i)=0.  
\end{equation}

 Note that $E_{\Delta}(x)$ is the sum of twelve terms and that $Q(x)[[r,r]]$ is a sum of nine terms, but two terms appear in both sums up to sign and hence are canceled. Thus $E_{\Delta}(x)+Q(x)[[r,r]]$ is a sum of seventeen terms. After rearranging the terms suitably, we obtain 
\begin{align*}
    &E_{\Delta}(x)+Q(x)[[r,r]]\\
    =&\sum_{i,j}\big\{-(x\bullet b_i)\bullet a_j\otimes b_j\otimes a_i+x\bullet (a_i\bullet a_j)\otimes b_i\otimes b_j+(x\bullet b_i)\bullet b_j\otimes a_i\otimes a_j\\
    &-(b_i\bullet b_j)\otimes (x\bullet a_i)\otimes a_j+(a_i\bullet a_j)\otimes (x\bullet b_i)\otimes b_j+(b_i\bullet a_j)\otimes b_j\otimes (x\bullet a_i)\\
    &+(a_i\bullet a_j)\otimes b_i\otimes (x\bullet b_j)-a_i\otimes (x\bullet b_i)\bullet a_j\otimes b_j+a_i\otimes a_j\otimes (x\bullet b_i)\bullet b_j\\
    &+b_j\otimes (x\bullet a_i)\otimes (b_i\otimes a_j)-b_j\otimes a_i\otimes (x\bullet b_i)\bullet a_j-a_j\otimes (b_i\bullet b_j)\otimes (x\bullet a_i)\\
    &+a_j\otimes (x\bullet b_i)\bullet b_j\otimes a_i-a_i\otimes x\bullet (b_i\bullet a_j)\otimes b_j-a_i\otimes (b_i\bullet a_j)\otimes (x\bullet b_j)\\
    &+a_i\otimes (x\bullet a_j)\otimes (b_i\bullet b_j)+a_i\otimes a_j\otimes x\bullet (b_i\bullet b_j)\big\}.
\end{align*}
Interchanging the indices $i$ and $j$ in the first term and using the Jacobi identity in $A$, the
first term becomes
\begin{equation*}
   \sum_{i,j} x\bullet (b_j\bullet a_i)\otimes b_i\otimes a_j+b_j\bullet (a_i\bullet x)\otimes b_i\otimes a_j.
\end{equation*}
Using the Eq. \eqref{cond}, the sum of $b_j\bullet (a_i\bullet x)\otimes b_i\otimes a_j$ and the third and fourth terms is
\begin{align*}
    &\sum_{i,j} \big(L(b_j)\otimes id\big)\big((a_i\bullet x)\otimes b_i+(x\bullet b_i)\otimes a_i -b_i\otimes (x\bullet a_i)\big)\otimes a_j\\&\sum_{j} \big(L(b_j)\otimes id\big)\sum_{i}\big((a_i\bullet x)\otimes b_i+(x\bullet b_i)\otimes a_i -b_i\otimes (x\bullet a_i)\big)\otimes a_j\\
    =&\sum_{i,j}\big(L(b_j)\otimes id\big)\big(a_i\otimes (x\bullet b_i)\big)\otimes a_j\\
    =&\sum_{i,j}(a_i\bullet b_j)\otimes (x\bullet b_i)\otimes a_j.
\end{align*}
Similarly, the sum of $(a_i\bullet b_j)\otimes (x\bullet b_i)\otimes a_j$ and the fifth term becomes
\begin{equation*}
   \sum_{i,j} b_j\otimes (x\bullet b_i)\otimes (a_i\bullet a_j)+a_j\otimes (x\bullet b_i)\otimes (a_i\bullet b_j),
\end{equation*}
and the sum of the sixth and seventh terms is
\begin{equation*}
   \sum_{i,j} a_j\otimes b_i\otimes x\bullet (a_i\bullet b_j)+b_j\otimes b_i\otimes x\bullet (a_i\bullet a_j).
\end{equation*}
Finally, the sum of $x\bullet (b_j\bullet a_i)\otimes b_i\otimes a_j$ and the second term in the sum of the
expression of $E_{\Delta}(x)+Q(x)[[r,r]]$ becomes
\begin{equation*}
  \sum_{i,j}  b_j\otimes b_i\otimes a_i\bullet (x\bullet a_j)+a_j\otimes b_i\otimes a_i\bullet (x\bullet b_j).
\end{equation*}
Inserting these results, we find that the expression of $E_{\Delta}(x)+Q(x)[[r,r]]$ can be written in the form $\sum_i\big(a_i\otimes U_i+b_i\otimes V_i\big)$: In fact, 
\begin{align*}
    U_i=&\sum_{j}\big\{-(b_j\bullet b_i)\otimes (x\bullet a_j)-(b_i\bullet a_j)\otimes (x\bullet b_j)+b_j\otimes x\bullet (a_j\bullet b_i)\\
    &+a_j\otimes x\bullet (b_i\bullet b_j)+(x\bullet b_j)\otimes (a_j\bullet b_i)+(x\bullet a_j)\otimes (b_i\bullet b_j)\\
    &-x\bullet (b_i\bullet a_j)\otimes b_j+(x\bullet b_j)\bullet b_i\otimes a_j-(x\bullet b_i)\bullet a_j\otimes b_j\\
    &+a_j\otimes (x\bullet b_i)\bullet b_j+b_j\otimes (x\bullet b_i)\bullet a_j\big\}.
\end{align*}
On the right-hand side, the sum of the first four terms is zero by Eq. \eqref{cond}, and the sum of the next
three terms becomes 
\begin{equation*}
    x\bullet (b_i\bullet b_j)\otimes a_j.
\end{equation*}
By the Jacobi identity in $A$, the sum of $x\bullet (b_i\bullet b_j)\otimes a_j$ and the eighth term is
\begin{equation*}
    -b_j\bullet (x\bullet b_i)\otimes a_j.
\end{equation*}
Finally, the sum of $-b_j\bullet (x\bullet b_i)\otimes a_j$ and the last three terms becomes
\begin{equation*}
\sum_{j}  -b_j\bullet (x\bullet b_i)\otimes a_j  -(x\bullet b_i)\bullet a_j\otimes b_j+a_j\otimes (x\bullet b_i)\bullet b_j+b_j\otimes (x\bullet b_i)\bullet a_j=0,
\end{equation*}
if we replace $x$ in Eq. \eqref{cond} by $x\bullet b_i$: Hence, we get $U_i= 0$. Similarly, we can
proves that
\begin{align*}
    V_i=&\sum_{j}\big\{(x\bullet b_j)\otimes (a_j\bullet a_i)+b_j\otimes x\bullet (a_j\bullet a_i)+b_j\otimes a_j\bullet (x\bullet a_i)\\
    &\quad\quad\quad\quad+(x\bullet a_j)\otimes (b_j\bullet a_i)-a_j\otimes (x\bullet b_j)\bullet a_i\big\}\\=&0.
\end{align*}
Hence the conclusion holds.
\end{proof}
Using  the above  discussion, we have the following result.
\begin{thm}\label{thmcobou}
Let $(A,\bullet)$ be a mock-Lie algebra and $r\in A\otimes A$. Define a bilinear map $\diamond:A^*\otimes A^*\rightarrow A^*$ by 
\begin{equation*}
    \langle \xi\diamond \eta,x\rangle =\langle \Delta^*(\xi \otimes \eta),x\rangle =\langle \xi \otimes \eta,\Delta(x)\rangle,
\end{equation*}
where $\Delta$ is defined by Eq. \eqref{coboundary}. Then $(A^*,\diamond)$ is a mock-Lie algebra if and only
if the following conditions are satisfied:
\begin{align*}
    &(i)\quad \big(L(x)\otimes id -id \otimes L(x)\big)\big(r+\tau(r)\big)=0,\\
    &(ii)\quad Q(x)[[r,r]]=0,
\end{align*}
for all $x\in A$. Under these conditions, $(A,A^*)$ is a coboundary mock-Lie bialgebra.
\end{thm}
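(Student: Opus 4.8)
The plan is to split the requirement that $(A^*,\diamond)$ be a mock-Lie algebra into its two defining axioms --- commutativity and the Jacobi identity --- and to match each one with one of the two displayed conditions. First I would dispose of commutativity. Writing $r=\sum_i a_i\otimes b_i$ and using $\langle \xi\diamond\eta,x\rangle=\langle\xi\otimes\eta,\Delta(x)\rangle$, one checks immediately that $\diamond$ is commutative on $A^*$ if and only if $\Delta(x)=\tau(\Delta(x))$ for every $x\in A$. Expanding $\Delta(x)=(L(x)\otimes id-id\otimes L(x))r$ and subtracting its image under $\tau$ gives
\[
\Delta(x)-\tau(\Delta(x))=\big(L(x)\otimes id-id\otimes L(x)\big)\big(r+\tau(r)\big),
\]
which is precisely the left-hand side of Eq. \eqref{cond}. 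Hence commutativity of $\diamond$ is equivalent to condition $(i)$.

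Next I would handle the Jacobi identity. By Lemma \ref{lemme1}, the product $\diamond$ obeys the Jacobi identity exactly when $E_{\Delta}=0$. Assuming $(i)$, Proposition \ref{proposition1} supplies the identity $E_{\Delta}(x)+Q(x)[[r,r]]=0$ for all $x\in A$; therefore, under $(i)$, one has $E_{\Delta}=0$ if and only if $Q(x)[[r,r]]=0$ for all $x$, i.e. condition $(ii)$. Combining the two paragraphs yields the stated equivalence in both directions: $(A^*,\diamond)$ is a mock-Lie algebra $\iff$ it is commutative and satisfies Jacobi $\iff$ $(i)$ and $(ii)$ both hold. (For the forward direction one simply reverses the reasoning: commutativity of $\diamond$ forces $(i)$, after which the Jacobi identity together with Proposition \ref{proposition1} forces $(ii)$.)

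For the last assertion, assume $(i)$ and $(ii)$. The map $\Delta$ defined by \eqref{coboundary} automatically satisfies the compatibility condition \eqref{bialgebra} by the Lemma asserting that any coboundary $\Delta$ of the form \eqref{coboundary} is a $1$-cocycle; moreover $\Delta$ is symmetric, since we have just shown $\diamond$ is commutative; and $\Delta^*$ defines a mock-Lie algebra structure on $A^*$ by the equivalence proved above. Thus all the requirements of Definition \ref{definition1} are met, so $(A,A^*)$ is a mock-Lie bialgebra, and it is coboundary by the very shape \eqref{coboundary} of $\Delta$.

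The genuinely hard part is already absorbed into Proposition \ref{proposition1}, whose proof packages the seventeen-term expansion of $E_{\Delta}(x)+Q(x)[[r,r]]$ under hypothesis $(i)$; granting that, the present theorem is just an assembly of Lemma \ref{lemme1}, Proposition \ref{proposition1}, and the coboundary-compatibility Lemma. The only new (and entirely elementary) point is the identification of commutativity of $\diamond$ with condition $(i)$ recorded in the first paragraph above, so I do not expect any real obstacle in writing out the details.
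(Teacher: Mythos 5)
Your proposal is correct and follows essentially the same route as the paper: commutativity of $\diamond$ is identified with condition $(i)$ via $\Delta(x)-\tau(\Delta(x))=\big(L(x)\otimes id-id\otimes L(x)\big)\big(r+\tau(r)\big)$, the Jacobi identity is reduced to $(ii)$ through Lemma \ref{lemme1} and Proposition \ref{proposition1}, and the compatibility condition is supplied by the coboundary lemma. If anything, your explicit remark that Proposition \ref{proposition1} is only applicable once $(i)$ is in force is slightly more careful than the paper's own phrasing.
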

\begin{proof}
The bracket $\diamond$ is determined by the cobracket $\Delta(x)=\big(L(x)\otimes id -id \otimes L(x)\big)r$. Hence $(A^*,\diamond)$
is a  mock-Lie algebra if
and only if $\diamond$ is symmetric and satisfies the Jacobi identity.

For any $x\in A,\;\xi,\eta\in A^*$, we have 
\begin{align*}
    \langle\xi \diamond \eta-  \eta\diamond\xi,x\rangle &=\langle \Delta^*(\xi \otimes \eta)- \Delta^*( \eta\otimes\xi ),x\rangle=\langle \xi\otimes \eta,\Delta(x)-\tau\circ \Delta(x)\rangle\\
    & =\langle \xi \otimes \eta, \big(L(x)\otimes id -id \otimes L(x)\big)r-\tau\circ( \big(L(x)\otimes id -id \otimes L(x)\big)r)\rangle\\ &=\langle \xi \otimes \eta,L(x)r_1\otimes r_2-r_1\otimes L(x)r_2-r_2\otimes L(x)r_1+L(x)r_2\otimes r_1\rangle\\
    &=\langle \xi \otimes \eta,\big(L(x)\otimes id -id \otimes L(x)\big)\big(r+\tau(r)\big)\rangle.
\end{align*}
Then  $r$ satisfies $(i)$ if and only if  $\diamond$ is symmetric. The
proof that $(ii)$ is equivalent to the condition that $\diamond$ satisfies the Jacobi identity which follows
from Lemma \ref{lemme1} and Proposition \ref{proposition1}. Since $\Delta(x)=\big(L(x)\otimes id -id \otimes L(x)\big)r$, the compatibility conditions
for a mock-Lie bialgebra in Definition \ref{definition1} hold naturally. Therefore the conclusion follows.
\end{proof}
\begin{rmk}
An easy way to satisfy conditions $(i)$ and $(ii)$ in Theorem \ref{thmcobou} is to assume that $r$ is
skew-symmetric and 
\begin{equation}\label{mockyb}
    [[r,r]]=0
\end{equation}
respectively. Eq. \eqref{mockyb} is the \textbf{mock-Lie Yang-Baxter equation} in the mock-Lie algebra $(A,\bullet)$. A quasitriangular mock-Lie bialgebra
is a coboundary mock-Lie bialgebra, in which $r$ is a solution of the mock-Lie Yang-
Baxter equation. A triangular mock-Lie bialgebra is a coboundary mock-Lie bialgebra, in which $r$
is a skew-symmetric solution of the mock-Lie Yang-Baxter equation.
\end{rmk}
A direct application of Theorem \ref{thmcobou} is given as follows.
\begin{thm}\label{double}
Let $(A,A^*)$ be a mock-Lie bialgebra. Then there is a canonical coboundary
mock-Lie  bialgebra structure on $A\oplus A^*$ such that both  $i_1 : A \rightarrow A\oplus A^*$ and
$i_2 : A^* \rightarrow A \oplus A^*$
into the two summands are homomorphisms of mock-Lie bialgebras. Here the mock-Lie bialgebra structure on $A$ is $(A,-\Delta_A)$
, where $\Delta_A$ is given by Eq. \eqref{coboundary}.
\end{thm}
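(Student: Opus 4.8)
The plan is to mimic the classical Drinfeld‐double construction for Lie bialgebras, adapting it to the mock‑Lie setting using the machinery already built in Sections~3 and~4. Given a mock‑Lie bialgebra $(A,A^*)$, Proposition~\ref{matchmanin} (together with the final theorem of Section~3) tells us that $(A,A^*;L^*,\mathcal L^*)$ is a matched pair, so the vector space $D:=A\oplus A^*$ carries the mock‑Lie algebra structure $A\bowtie A^*$ of Remark~\ref{gamma}, and $(A\oplus A^*,A,A^*)$ is a standard Manin triple with respect to $\omega_d$ from Eq.~\eqref{standard bilinear form}. This is the mock‑Lie algebra that will underlie the double; I would first record its bracket explicitly via Eq.~\eqref{eqmatch} with $\rho=L^*$, $\mu=\mathcal L^*$.

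Next I would produce the element $r\in D\otimes D$ whose coboundary gives the bialgebra cobracket on $D$. Choose a basis $\{e_i\}$ of $A$ with dual basis $\{e^i\}$ of $A^*$ and set $r=\sum_i e_i\otimes e^i\in D\otimes D$ (viewed inside $(A\oplus A^*)^{\otimes 2}$). The key observations are: (1) $r+\tau(r)=\sum_i(e_i\otimes e^i+e^i\otimes e_i)$ is exactly the symmetric $2$‑tensor corresponding to the nondegenerate invariant form $\omega_d$; since $\omega_d$ is invariant, condition $(i)$ of Theorem~\ref{thmcobou}, namely $\big(L_D(z)\otimes id-id\otimes L_D(z)\big)(r+\tau(r))=0$ for all $z\in D$, holds — this is the standard identity that an invariant symmetric tensor is annihilated by the adjoint coaction, and it is precisely where invariance of $\omega_d$ is used. (2) $[[r,r]]=0$ in $D$: this is the mock‑Lie Yang‑Baxter equation for the canonical $r$ of a Manin triple, and it should follow from the compatibility conditions \eqref{matched1}–\eqref{matched2} of the matched pair $(A,A^*;L^*,\mathcal L^*)$ by a direct expansion of $r_{12}\bullet r_{13}+r_{13}\bullet r_{23}-r_{12}\bullet r_{23}$ in the basis, exactly as in the Lie case where the canonical $r$‑matrix of a Drinfeld double solves CYBE. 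By Theorem~\ref{thmcobou}, $(D,D^*)$ with $\Delta_D(z)=\big(L_D(z)\otimes id-id\otimes L_D(z)\big)r$ is then a (quasitriangular, indeed the $r$ above is symmetric, so one works with the symmetric‑tensor version of Theorem~\ref{thmcobou}) coboundary mock‑Lie bialgebra.

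Finally I would check that $i_1:A\to D$ and $i_2:A^*\to D$ are bialgebra homomorphisms. On the algebra level this is immediate: $A$ and $A^*$ are subalgebras of $A\bowtie A^*$ by construction of the Manin triple. On the coalgebra level one must verify $(i_1\otimes i_1)\circ\Delta_A=\Delta_D\circ i_1$ and similarly for $i_2$, where $\Delta_A$ is the cobracket one reads off: evaluating $\Delta_D(x)=\big(L_D(x)\otimes id-id\otimes L_D(x)\big)r$ for $x\in A$ and projecting, the $A^*$‑component of $L_D(x)$ acting on the $e^i$‑leg of $r$ is governed by $L^*(x)$, which dualizes to the original $\Delta$ on $A$; tracking signs shows the induced structure on the summand $A$ is $(A,-\Delta_A)$, as stated, while on $A^*$ one similarly recovers $(A^*,\gamma)$ up to the analogous sign. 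The main obstacle is the bookkeeping in step two: verifying $[[r,r]]=0$ requires carefully splitting each $e_i\bullet e_j$, $e^i\diamond e^j$, $L^*(e_i)e^j$, $\mathcal L^*(e^i)e_j$ into its $A$‑ and $A^*$‑components and matching the resulting nine families of terms against \eqref{matched1}–\eqref{matched2}; this is routine but lengthy, and is the only place where real computation enters. Everything else is formal consequence of Theorem~\ref{thmcobou} and the Manin‑triple picture.
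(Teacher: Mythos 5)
Your proposal follows essentially the same route as the paper: the same canonical element $r=\sum_i e_i\otimes f_i\in A\otimes A^*\subset D(A)\otimes D(A)$, verification of conditions $(i)$ and $(ii)$ of Theorem~\ref{thmcobou} for the matched-pair product on $D(A)=A\bowtie A^*$, and then the basis computation of $\Delta_{D(A)}(e_i)=-\Delta_A(e_i)$ and $\Delta_{D(A)}(f_i)=\Delta_{A^*}(f_i)$ to see that $i_1,i_2$ are bialgebra homomorphisms with the stated sign. One small slip: this $r$ is neither symmetric nor skew-symmetric, so there is no ``symmetric-tensor version'' of Theorem~\ref{thmcobou} to invoke --- but none is needed, since that theorem only requires conditions $(i)$ and $(ii)$, exactly as you check them (and in fact the paper's verification of $[[r,r]]_{D(A)}=0$ reduces, after pairing against basis elements, to a cancellation using only the duality between the products and the cobrackets, rather than the full matched-pair identities you anticipate).
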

\begin{proof}
Let $r\in A\otimes A^*\subset (A\oplus A^*)\otimes (A\oplus A^*)$ correspond to the identity map $id : A \rightarrow A$. Let $\{e_1,\cdots,e_n\}$ be a basis of $A$ and $\{f_1,\cdots,f_n\}$ be its dual basis. Then $r=\sum_ie_i\otimes f_i$. Suppose that the mock-Lie algebra structure $"\circ_{D(A)}"$ on $A\oplus A^*$ is given by $D(A)=A\bowtie A^*$. Then  by Eq. \eqref{eqmatch}, we have 
\begin{equation*}
    x\circ_{D(A)} y=x\bullet y,\quad a^*\circ_{D(A)} b^*=a^*\diamond b^*,\quad x\circ_{D(A)} a^*=L^*(x)a^*+\mathcal{L^*}(a^*)x,
\end{equation*}
for any $x,y\in A$, $a^*,b^*\in A^*$.  Next we prove that $r$ satisfies the two conditions in Theorem \ref{thmcobou}.
If so, then
\begin{equation*}
    \Delta_{D(A)}(u)=(L_{\circ_{D(A)}}(u)\otimes id_{D(A)}-id_{D(A)}\otimes L_{\circ_{D(A)}}(u))r,\quad \forall u\in D(A)
\end{equation*}
can induce a coboundary mock-Lie bialgebra structure on $D(A)$. Since 
\begin{equation*}
    \langle \sum_i e_i\otimes f_i,f_s\otimes e_t\rangle=\langle e_t,f_s\rangle,
\end{equation*}
we have 
\small{\begin{align*}
    &\langle [[r,r]]_{D(A)},(e_s+f_t)\otimes (e_k+f_l)\otimes (e_p+f_q)\rangle\\
    =&\sum_{ij}\langle e_i\circ_{D(A)}e_j\otimes f_i\otimes f_j-e_i\otimes f_i\circ_{D(A)}e_j\otimes f_j+e_i\otimes e_j\otimes f_i\circ_{D(A)}f_j,(e_s+f_t)\otimes (e_k+f_l)\otimes (e_p+f_q)\rangle\\
    =&\sum_{ij}\langle e_i\bullet e_j\otimes f_i\otimes f_j-e_i\otimes (\mathcal{L}^*(f_i)e_j+L^*(e_j)f_i)\otimes f_j+e_i\otimes e_j\otimes f_i\diamond f_j,(e_s+f_t)\otimes (e_k+f_l)\otimes (e_p+f_q)\rangle\\
    =& \sum_{ij}\Big(\langle e_i\bullet e_j,f_t\rangle\langle f_i,e_k\rangle \langle f_j,e_p\rangle -\langle e_i,f_t\rangle \langle e_j,f_i\diamond f_l\rangle \langle f_j,e_p\rangle -\langle e_i,f_t\rangle \langle f_i,e_j\bullet e_k\rangle \langle f_j,e_p\rangle \\
    &+\langle e_i,f_t\rangle \langle e_j,f_l\rangle \langle f_i\diamond f_j,e_p\rangle \Big)\\
    =& \langle e_k\bullet e_p,f_t\rangle -\langle e_p,f_t\diamond f_l\rangle -\langle f_t,e_p\bullet e_k\rangle +\langle f_t\diamond f_l,e_p\rangle\\
    =&0,
\end{align*}}
we get $[[r,r]]_{D(A)}=0$. Similarly, we prove that $$\big(L_{\circ_{D(A)}}(u)\otimes id_{D(A)} -id_{D(A)} \otimes L_{\circ_{D(A)}}(u)\big)\big(r+\tau(r)\big)=0,$$ for all $u\in D(A)$. Hence there is a coboundary mock-Lie bialgebra structure on $D(A)$ by Theorem \ref{thmcobou}.
For $e_i \in A$, we have
\begin{align*}
    &\Delta_{D(A)}(e_i)=\sum_j\big\{e_i\bullet e_j\otimes f_j -e_j\otimes e_i\circ _{D(A)}f_j\big\}\\
    &\quad \quad \quad\quad \; =\sum_{j}\big\{e_i\bullet e_j\otimes f_j-e_j\otimes\big(L^*(e_i)f_j+\mathcal{L}^*(f_j)e_i\big)\big\}\\
    &\quad\quad \quad\quad \;=\sum_{j,m}\big\{e_i\bullet e_j\otimes f_j-\langle f_j,e_i\bullet e_m\rangle e_j\otimes f_m-\langle f_j\diamond f_m,e_i\rangle e_j\otimes e_m\big\}\\
    &\quad\quad \quad\quad \;=-\sum_{j,m}\langle f_j\diamond f_m,e_i\rangle e_j\otimes e_m\\
    &\quad \quad \quad\quad \;=-\Delta_A(e_i).
\end{align*}
Therefore  $i_1 : A \rightarrow A \oplus A^*$
is a homomorphism of mock-Lie bialgebras.
Similarly,  $i_2 : A^* \rightarrow A \oplus A^*$
is also a homomorphism of mock-Lie  bialgebras
since $\Delta_{D(A)}(f_i)=\Delta_{A^*}(f_i)$.
\end{proof}
 \begin{rmk}
 With the above mock-Lie bialgebra structure given in Theorem \ref{double}, $A\oplus A^*$
is called the  double of $A$. We denote it
by $D(A)$.
 \end{rmk}
\section{$\mathcal{O}$-operators of mock-Lie algebras and mock-Lie Yang-Baxter equation}
In this section, we interpret solution of mock-Lie Yang-Baxter equation in term of  $\mathcal{O}$-operators $($see \cite{Kupershmidt}$)$.
Let $V$ be a vector space. For any $r\in V\otimes V$, $r$ can be regarded as a map
from $V^*$ to $V$ in the following way:
\begin{equation}\label{formlinr}
    \langle u^*,r( v^*)\rangle=\langle u^*\otimes  v^*,r\rangle,\quad \forall u^*,v^*\in V^*,
\end{equation}
where $\langle\cdot,\cdot\rangle$ is the ordinary pairing between the vector space $V$ and the dual
space $V^*$. The tensor $r\in V\otimes V$ is called nondegenerate if the above induced
linear map is invertible. Moreover, any invertible linear map $T : V^*\rightarrow V$
induces a nondegenerate bilinear form $\omega(, )$ on $V$ by
\begin{equation}
    \omega(u,v)=\langle T^{-1}(u),v\rangle,\quad \forall u,v\in V.
\end{equation}
\begin{defi}\cite{Baklouti}
A \textbf{symplectic form} on a mock-Lie algebra $(A, \c)$ is a skew-symmetric
non-degenerate bilinear form $\omega$ satisfying
\begin{equation}
\omega(x\bullet y, z) + \omega(y\bullet z, x) + \omega(z\bullet x, y) = 0,\quad \forall x, y, z \in A.
\end{equation}
A mock-Lie algebra is called symplectic if it is endowed with a symplectic form.
\end{defi}
\begin{pro}\label{solution1}
Let $(A,\bullet)$ be a mock-Lie algebra and $r\in A\otimes A$ be skew-symmetric. Then $r$ is a solution of mock-Lie YBE in $A$ if and only if $r$ satisfies
\begin{equation}\label{solution}
    r(\xi)\bullet  r(\eta)=r\big(L^*(r(\xi))\eta+L^*(r(\eta))\xi\big),\quad \forall \xi,\eta \in A^*.
\end{equation}
\end{pro}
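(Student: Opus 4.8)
The plan is to translate the tensor identity $[[r,r]]=0$ into the operator identity \eqref{solution} by pairing $[[r,r]]\in A^{\otimes 3}$ against an arbitrary element $\xi\otimes\eta\otimes\nu\in (A^*)^{\otimes 3}$ and carefully bookkeeping which leg of each $r_{ij}$ pairs with which $\xi$, $\eta$, $\nu$. Since $r$ is skew-symmetric, the induced map $r:A^*\to A$ satisfies $r^* = -r$ under the obvious identifications, and this antisymmetry is what lets me fold the three terms $r_{12}\bullet r_{13}$, $r_{13}\bullet r_{23}$, $-r_{12}\bullet r_{23}$ into a single expression built from $r$, $\bullet$, and the dual action $L^*$. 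Concretely, writing $r=\sum_i a_i\otimes b_i$, I would compute $\langle [[r,r]],\xi\otimes\eta\otimes\nu\rangle$ directly: each summand contributes a scalar of the form $\langle a_i\bullet a_j,\cdot\rangle\langle b_i,\cdot\rangle\langle b_j,\cdot\rangle$ with the dots being a permutation of $\xi,\eta,\nu$, and I collect these using $\langle a_i,\xi\rangle a_i$-type sums to reconstruct $r(\xi)$, $r(\eta)$, $r(\nu)$.

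The key steps, in order: (1) fix the convention relating $r\in A\otimes A$ to $r:A^*\to A$ via \eqref{formlinr}, and record that skew-symmetry of $r$ gives $\langle\xi,r(\eta)\rangle = -\langle\eta,r(\xi)\rangle$; (2) using the invariance/definition of $L^*$ from \eqref{repdual}, note that for $x\in A$ and $\xi\in A^*$ one has $\langle L^*(x)\xi,y\rangle=\langle\xi,x\bullet y\rangle$, so that pairing $\xi$ against the "$a_i\bullet a_j$" leg can be moved onto one of the $b$'s; (3) expand $\langle [[r,r]],\xi\otimes\eta\otimes\nu\rangle$ term by term (nine scalar expressions from the three summands of \eqref{mockYBE}) and, applying step (2), rewrite everything so that one free variable — say $\nu$ — pairs with a single leg, producing an element of $A$; (4) observe that the resulting element of $A$ is exactly $r(L^*(r(\xi))\eta + L^*(r(\eta))\xi) - r(\xi)\bullet r(\eta)$ (up to the symmetrization coming from the cyclic structure of $[[r,r]]$ and the skew-symmetry of $r$), so that $[[r,r]]=0$ for all $\xi,\eta,\nu$ is equivalent to \eqref{solution} holding for all $\xi,\eta$. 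Conversely, reading the same chain of equalities backwards gives the other implication, so no separate argument is needed.

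The main obstacle I anticipate is purely combinatorial: keeping the index/leg bookkeeping straight when converting between the three-tensor $[[r,r]]$ and the operator form. The term $-r_{12}\bullet r_{23}=-\sum_{ij}a_i\otimes a_j\bullet b_i\otimes b_j$ mixes an $a$ and a $b$ in its middle leg, so when I pair the middle slot with a covector I must use skew-symmetry of $r$ to flip it into the "standard" shape; getting the signs consistent there is where an error is most likely. A secondary subtlety is that $[[r,r]]$ as written is not manifestly cyclic-symmetric, whereas \eqref{solution} treats $\xi$ and $\eta$ symmetrically; I would handle this by noting that when $r$ is skew-symmetric, $[[r,r]]$ acquires enough symmetry (it lies in the image of $\mathrm{id}+\sigma+\sigma^2$ after the dust settles, as in Lemma \ref{lemme1}) that testing against $\xi\otimes\eta\otimes\nu$ for all triples is equivalent to the two-variable identity. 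Once the pairing computation is set up correctly, the rest is a direct verification, and the equivalence drops out by reading the equalities in both directions.
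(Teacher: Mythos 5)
Your strategy is correct and the computation you outline does close. With $r=\sum_i a_i\otimes b_i$ and the convention \eqref{formlinr}, skew-symmetry gives $\sum_i\langle\alpha,b_i\rangle a_i=r(\alpha)$ and $\sum_i\langle\alpha,a_i\rangle b_i=-r(\alpha)$; pairing the three summands of \eqref{mockYBE} against $\xi\otimes\eta\otimes\nu$ yields $\langle\xi,r(\eta)\bullet r(\nu)\rangle$, $\langle\nu,r(\xi)\bullet r(\eta)\rangle$ and $\langle L^{*}(r(\nu))\eta,r(\xi)\rangle$, and moving everything onto the first slot via $\langle\alpha,r(\beta)\rangle=-\langle\beta,r(\alpha)\rangle$ and $\langle L^{*}(x)\alpha,y\rangle=\langle\alpha,x\bullet y\rangle$ gives
\begin{equation*}
\langle [[r,r]],\xi\otimes\eta\otimes\nu\rangle=\Big\langle \xi,\; r(\eta)\bullet r(\nu)-r\big(L^{*}(r(\eta))\nu+L^{*}(r(\nu))\eta\big)\Big\rangle ,
\end{equation*}
so nondegeneracy of the pairing gives both implications at once. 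This is the same underlying verification as the paper's, but executed differently: the paper fixes a basis, writes $r=\sum a_{ij}e_i\otimes e_j$ with $a_{ij}=-a_{ji}$ and structure constants $C^{k}_{ij}$, expands $r_{12}\bullet r_{13}$, $r_{13}\bullet r_{23}$, $r_{12}\bullet r_{23}$ explicitly, and matches the resulting coefficient identity with the one obtained from \eqref{solution} on basis covectors. Your invariant formulation is cleaner and makes visible exactly where skew-symmetry and the adjoint $L^{*}$ enter, at the cost of the leg/sign bookkeeping you correctly flag as the danger point; the coordinate version trades that for index bookkeeping. One remark: the ``secondary subtlety'' you raise about the cyclic asymmetry of $[[r,r]]$ is a non-issue here, since once the second and third slots are taken as the two arguments the resulting identity is already symmetric in them (because $\bullet$ is commutative), so no appeal to $\mathrm{id}+\sigma+\sigma^{2}$ or to Lemma \ref{lemme1} is needed for this proposition.
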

\begin{proof}
Let $\{e_1,\cdots,e_n\}$ be a basis of $A$ and $\{e^*_1,\cdots,e^*_n\}$ be the dual basis. Since $r$ is skew-symmetric, we can set $r=\sum_{1\leq i,j\leq n}a_{ij}e_i\otimes e_j,\;a_{ij}=-a_{ji}$. Suppose that $e_i\bullet e_j=\sum_{k=1}^{n}C^k_{ij}e_k$, where $C^k_{ij}$’s are the structure coefficients of mock-Lie algebra $A$ on the basis $\{e_1,\cdots,e_n\}$, then we get
\begin{align*}
    &r_{12}\bullet r_{13}=\big(\sum_{1\leq i,j\leq n}a_{ij}e_i\otimes e_j\otimes 1\big)\bullet \big(\sum_{1\leq p,q\leq n}a_{pq}e_p\otimes 1\otimes e_q\big)\\
    &\quad \quad \quad \;\;=\sum_{1\leq i,j,p,q,k\leq n}C^k_{ip}a_{ij}a_{pq}e_k\otimes e_j\otimes e_q;\\
    &r_{13}\bullet r_{23}=\sum_{1\leq i,j,p,q,k\leq n}C^k_{jq}a_{ij}a_{pq}e_i\otimes e_p\otimes e_k;\\
    &r_{12}\bullet r_{23}=\sum_{1\leq i,j,p,q,k\leq n}C^k_{jp}a_{ij}a_{pq}e_i\otimes e_k\otimes e_q.
\end{align*}
Then $r$ is a solution of mock-Lie YBE in $A$ if and only if
\begin{equation*}
   \sum_{1\leq i,p\leq n}\big(C^k_{ip}a_{ij}a_{pq}+C^q_{pi}a_{kp}a_{ji}-C^j_{ip}a_{ki}a_{pq}\big) e_k\otimes e_j\otimes e_q. 
\end{equation*}
On the other hand, by Eq. \eqref{formlinr}, we get $r(e^*_j)=\sum_{i=1}^{n}a_{ij}e_i=-\sum_{i=1}^{n}a_{ji}e_i,\;1\leq j\leq n$. If we take $\xi=e^*_j$, $\eta=e^*_q$ and by Eq. \eqref{solution}, we get \begin{equation*}
   \sum_{1\leq i,p\leq n}\big(C^k_{ip}a_{ij}a_{pq}+C^q_{pi}a_{kp}a_{ji}-C^j_{ip}a_{ki}a_{pq}\big) e_k=0. 
\end{equation*}
Therefore, it is easy to see that $r$ is a solution of mock-Lie YBE in $A$ if and only
if $r$ satisfies Eq. \eqref{solution}.
\end{proof}
\begin{ex}
Let $(A,\bullet)$ be a mock-Lie algebra. Then   a Rota-Baxter operator (of weight zero) is an $\mathcal{O}$-operator of $A$ associated to the adjoint representation $(A,L)$ and a skew-symmetric solution of mock-Lie YBE in $A$ is an $\mathcal{O}$-operator of $A$
associated to the representation $(A^*,L^*)$. 
\end{ex}
\begin{cor}
Let $(A,\bullet)$ be a mock-Lie algebra and $r\in A\otimes A$ be skew-symmetric. Suppose that there is a symmetric nondegenerate invariant bilinear
form $\omega$ on $A$. Let $\phi:A\rightarrow A^*$ a linear map given by $\langle \phi(x),y\rangle=\omega(x,y)$ for any $x,y\in A$. Then $r$ is a solution of mock-Lie YBE if and only if $r\phi$ is a Rota-Baxter operator (of weight zero) on $A$.
\end{cor}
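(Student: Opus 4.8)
The strategy is to transport the characterization of skew-symmetric solutions of the mock-Lie Yang-Baxter equation obtained in Proposition \ref{solution1} along the isomorphism $\phi$, using the fact that $\phi$ intertwines the adjoint and coadjoint representations.

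First I would note that, since $\omega$ is nondegenerate, the map $\phi:A\to A^*$ is a linear isomorphism, so $T:=r\phi:A\to A$ is well defined, and for every $\xi\in A^*$ there is a unique $x\in A$ with $\xi=\phi(x)$; for such $x$ one has $r(\xi)=r\phi(x)=T(x)$. Next, the earlier proposition relating $(A,L)$ and $(A^*,L^*)$ (equivalently, a one-line computation from the symmetry and invariance of $\omega$: $\langle L^*(x)\phi(y),w\rangle=\omega(y,x\bullet w)=\omega(x\bullet y,w)=\langle\phi(x\bullet y),w\rangle$) gives the key identity
\[
\phi\big(x\bullet y\big)=L^*(x)\phi(y),\qquad\forall x,y\in A,
\]
that is, $\phi\circ L(x)=L^*(x)\circ\phi$.

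Now I would rewrite Eq. \eqref{solution}. Taking $\xi=\phi(x)$ and $\eta=\phi(y)$, the left-hand side becomes $r(\xi)\bullet r(\eta)=T(x)\bullet T(y)$. For the right-hand side, $L^*(r(\xi))\eta=L^*(T(x))\phi(y)=\phi\big(T(x)\bullet y\big)$ and, by commutativity, $L^*(r(\eta))\xi=\phi\big(T(y)\bullet x\big)=\phi\big(x\bullet T(y)\big)$; hence
\[
r\big(L^*(r(\xi))\eta+L^*(r(\eta))\xi\big)=r\phi\big(T(x)\bullet y+x\bullet T(y)\big)=T\big(T(x)\bullet y+x\bullet T(y)\big).
\]
Since $\phi$ is bijective, letting $\xi,\eta$ range over $A^*$ is the same as letting $x,y$ range over $A$, so Eq. \eqref{solution} holds for all $\xi,\eta\in A^*$ if and only if
\[
T(x)\bullet T(y)=T\big(T(x)\bullet y+x\bullet T(y)\big)=T\big(L(Tx)y+L(Ty)x\big),\qquad\forall x,y\in A,
\]
the last equality again using commutativity of $\bullet$. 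By the definition of an $\mathcal{O}$-operator, this is exactly the assertion that $T=r\phi$ is an $\mathcal{O}$-operator associated to the adjoint representation $(A,L)$, i.e. a Rota-Baxter operator of weight zero. Combining this with Proposition \ref{solution1} (applicable because $r$ is skew-symmetric) yields the claimed equivalence in both directions.

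The only point beyond routine bookkeeping is the intertwining identity $\phi\circ L(x)=L^*(x)\circ\phi$, and this is already contained in the earlier proposition on the equivalence of $(A,L)$ and $(A^*,L^*)$; thus there is essentially no obstacle, the corollary being a direct translation of Proposition \ref{solution1} across the isomorphism $\phi$.
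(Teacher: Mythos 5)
Your proof is correct and follows essentially the same route as the paper: both establish the intertwining identity $\phi(x\bullet y)=L^*(x)\phi(y)$ from the symmetry and invariance of $\omega$, and then substitute $\xi=\phi(x)$, $\eta=\phi(y)$ into Eq.~\eqref{solution} to translate it into the Rota--Baxter identity for $T=r\phi$ via Proposition~\ref{solution1}. Your write-up is somewhat more explicit about the bijectivity of $\phi$ and the role of commutativity, but the argument is the same.
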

\begin{proof}
 For any $x,y\in A$, we have $\phi(L(x)y)=L^*(x)\phi(y)$ since 
 \begin{align*}
     &\langle \phi(L(x)y),z\rangle =\omega(x \bullet y,z)=\omega(y \bullet x,z)\\
     &\quad \quad \quad \quad \quad \;\;=\omega(y,x\bullet z)=\langle L^*(x)\phi(y),z\rangle,\quad \forall x,y,z\in A.
 \end{align*}
 That is, the representations $(A,L)$ and $(A^*,L^*)$ are isomorphic. Let $\xi=\phi(x),\eta=\phi(y)$, then by Proposition \ref{solution1}, $ r$ is a solution of mock-Lie YBE in $A$ if and
only if
\begin{align*}
    &r\phi(x)\bullet r\phi(y)=r(\xi)\bullet r(\eta)=r\Big(L^*(r(\xi))\eta+L^*(r(\eta))\xi\Big)\\
    &\quad \quad\;\;\quad \quad \quad \;\quad \quad\;\;\quad \quad\;\quad=r\phi\Big(r\phi(x)\bullet y+x\bullet r\phi(y)\Big).
\end{align*}
Therefore the conclusion holds.
\end{proof}
Let $(A,\bullet)$ be a mock-Lie algebra. Let $(V,\rho)$ be a representation of $A$ and $\rho^*:A\rightarrow gl(V^*)$ be the dual representation. A linear map $T:V\rightarrow A$ can be identified as an element in $A\otimes V^*\subset (A\ltimes_{\rho^*}V^*)\otimes (A\ltimes_{\rho^*}V^*)$ as follows: Let $\{e_1,\cdots,e_n\}$ be a basis of $A$, let $\{v_1,\cdots,v_m\}$ be a basis of $V$ and $\{v^*_1,\cdots,v^*_m\}$ be its dual space of $V^*$. We set 
\begin{equation*}
    T(v_i)=\sum_{k=1}^{n}a_{ik}e_k,\;i=1,\cdots,m.
\end{equation*}
Since as vector space,  $Hom(V,A)\cong A\otimes V^*$, then we have 
\begin{equation}
    T=\sum_{i=1}^{m}T(v_i)\otimes v^*_i=\sum_{i=1}^{m}\sum_{k=1}^{n}a_{ik}e_k\otimes v^*_i\in A\otimes V^*\subset (A\ltimes_{\rho^*}V^*)\otimes (A\ltimes_{\rho^*}V^*). 
\end{equation}
\begin{thm}\label{solution2}
With the above notations, $r=T-\tau(T)$ is a skew-symmetric solution of the mock-Lie YBE in the semi-direct product mock-Lie algebra $(A\ltimes_{\rho^*}V^*)$ if and only if $T$ is an $\mathcal{O}$-operator associated to $(V,\rho)$.
\end{thm}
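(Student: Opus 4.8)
The plan is to compute the Maurer–Cartan type quantity $[[r,r]]$ for $r = T - \tau(T)$ inside the semidirect product $\mathfrak{h} := A \ltimes_{\rho^*} V^*$ and to show, by evaluating against a basis of $\mathfrak{h}^* \otimes \mathfrak{h}^* \otimes \mathfrak{h}^*$, that it vanishes precisely when the $\mathcal{O}$-operator identity $T(u) \bullet T(v) = T\bigl(\rho(Tu)v + \rho(Tv)u\bigr)$ holds for all $u,v \in V$. Since $r$ is manifestly skew-symmetric by construction, condition $(i)$ of Theorem~\ref{thmcobou} is automatic, so the only content is the equation $[[r,r]]_{\mathfrak{h}} = 0$, and by the Remark after Theorem~\ref{thmcobou} this is exactly the mock-Lie Yang-Baxter equation for $r$ in $\mathfrak{h}$.

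First I would set up notation: with $T = \sum_{i} T(v_i) \otimes v_i^*$ viewed in $A \otimes V^* \subset \mathfrak{h} \otimes \mathfrak{h}$, write $r = \sum_i \bigl(T(v_i) \otimes v_i^* - v_i^* \otimes T(v_i)\bigr)$ and expand $[[r,r]] = r_{12}\bullet_{\mathfrak{h}} r_{13} + r_{13}\bullet_{\mathfrak{h}} r_{23} - r_{12}\bullet_{\mathfrak{h}} r_{23}$ using the semidirect product multiplication $(x+\phi)\bullet_{\mathfrak{h}}(y+\psi) = x\bullet y + \rho^*(x)\psi + \rho^*(y)\phi$ for $x,y \in A$, $\phi,\psi \in V^*$. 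The key bookkeeping point is that $V^* \bullet_{\mathfrak{h}} V^* = 0$ and $A \bullet_{\mathfrak{h}} A \subset A$, so most of the sixty-four bracket combinations of the four types of pure tensors drop out; the surviving terms land in $A \otimes A \otimes A$, in $A \otimes A \otimes V^*$ (and its cyclic images), or in $A \otimes V^* \otimes V^*$ (and cyclic images). I would then pair $[[r,r]]$ against $\xi \otimes \eta \otimes w$ where $\xi,\eta$ range over a basis of $A^*$ and $w$ over a basis of $V \subset \mathfrak{h}^*$, using $\langle v_i^*, v_j\rangle = \delta_{ij}$, $\langle \rho^*(x)v_i^*, v\rangle = \langle v_i^*, \rho(x)v\rangle$, and the definition $T^*(\xi) = \sum_i \langle \xi, T(v_i)\rangle v_i^* \in V^*$. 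After collecting, the coefficient of the $A\otimes A\otimes V^*$-part (against $\xi\otimes\eta\otimes w$) should reduce to $\bigl\langle \xi\otimes\eta,\ T(w)\cdot\text{(something)} - \cdots\bigr\rangle$ and ultimately to the pairing of $\xi\otimes\eta$ with $T(u)\bullet T(v) - T(\rho(Tu)v + \rho(Tv)u)$ for appropriate $u,v$ depending on which slot $w$ sits in; the remaining components vanish identically or reduce to the same identity after using commutativity, skew-symmetry of $r$, and the Jacobi identity in $A$.

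The main obstacle I anticipate is the combinatorial one: keeping track of which of the three cyclic pieces of $[[r,r]]$ contributes to which graded component of $\mathfrak{h}^{\otimes 3}$, and correctly transposing indices so that all the surviving contributions assemble into a single expression proportional to the $\mathcal{O}$-operator defect. In particular one must be careful that the cross terms involving $\rho^*$ acting on $V^*$-legs pair, via the adjunction, back to $\rho$ acting on $V$-legs of the test tensor, and that the skew-symmetrization $T - \tau(T)$ does not introduce spurious terms — here it is essential that $V^* \bullet_{\mathfrak{h}} V^* = 0$, which kills exactly the pieces that would obstruct the clean identification. Once the grading analysis is organized, each graded component is a short computation of the same flavour as the proof of Proposition~\ref{solution1}, and the equivalence follows by reading the chain of equalities in both directions, together with an appeal to Theorem~\ref{thmcobou} and the Remark identifying $[[r,r]]=0$ with the mock-Lie Yang-Baxter equation.
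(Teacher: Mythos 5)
Your proposal is correct and follows essentially the same route as the paper: expand $[[r,r]]$ for $r=T-\tau(T)$ in the semidirect product using $\rho^*(Tv_j)v_i^*=\sum_p\langle v_i^*,\rho(Tv_j)v_p\rangle v_p^*$ to convert the $\rho^*$-terms into $T(\rho(Tv_j)v_i)$-terms by reindexing, and collect everything into three copies (one per tensor slot) of the $\mathcal{O}$-operator defect $Tv_i\bullet Tv_j-T(\rho(Tv_i)v_j)-T(\rho(Tv_j)v_i)$ tensored with the linearly independent $v_i^*\otimes v_j^*$. The only cosmetic difference is that you pair against dual basis test tensors while the paper manipulates the tensors directly; both yield the equivalence in the same way.
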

\begin{proof}
 We have 
 \begin{equation*}
    r=T-\tau(T)= \sum_{i=1}^{m}T(v_i)\otimes v^*_i-\sum_{i=1}^{m}v^*_i\otimes T(v_i),
 \end{equation*}
 thus we obtain
 \begin{align*}
    &r_{12}\bullet r_{13}=\sum_{1\leq i,j\leq m}\Big(Tv_i\bullet Tv_j\otimes v^*_i\otimes v^*_j-\rho^*(Tv_i)v^*_j\otimes v^*_i\otimes Tv_j-\rho^*(Tv_j)v^*_i\otimes Tv_i\otimes v^*_j\Big);\\
    &r_{12}\bullet r_{23}=\sum_{1\leq i,j\leq m}\Big(-v^*_i\otimes Tv_i\bullet Tv_j\otimes  v^*_j+Tv_i\otimes \rho^*(Tv_j)v^*_i\otimes v^*_j+v^*_i\otimes \rho^*(Tv_i)v^*_j\otimes Tv_j\Big);\\
    &r_{13}\bullet r_{23}=\sum_{1\leq i,j\leq m}\Big(v^*_i\otimes v^*_j\otimes Tv_i\bullet Tv_j-Tv_i\otimes v^*_j\otimes \rho^*(Tv_j)v^*_i-v^*_i\otimes Tv_j\otimes \rho^*(Tv_i)v^*_j\Big).
 \end{align*}
 By the definition of dual representation, we know
 \begin{equation*}
     \rho^*(Tv_j)v^*_i=\sum_{p=1}^m\langle v^*_i,\rho(Tv_j)v_p\rangle v^*_p.
 \end{equation*}
 Then
 \begin{align*}
    &\sum_{1\leq i,j\leq m}Tv_i\otimes \rho^*(Tv_j)v^*_i\otimes v^*_j =\sum_{1\leq i,j,p\leq m} \langle v^*_p,\rho(Tv_j)v_i\rangle Tv_p\otimes v^*_i\otimes v^*_j\\
    &= \sum_{1\leq i,j\leq m}T\Big(\langle v^*_p,\rho(Tv_j)v_i\rangle v_p\Big)\otimes v^*_i\otimes v^*_j=\sum_{1\leq i,j\leq m}T\Big(\rho(Tv_j)v_i\Big)\otimes v^*_i\otimes v^*_j.
 \end{align*}
 Then we get
 
   \begin{align*}
    &\;\;\;\;r_{12}\bullet r_{13}=\sum_{1\leq i,j\leq m}\Big(Tv_i\bullet Tv_j\otimes v^*_i\otimes v^*_j-v^*_i\otimes v^*_j\otimes T(\rho( Tv_j)v_i)-v^*_i\otimes T(\rho(Tv_j)v_i)\otimes v^*_j\Big);\\
    &-r_{12}\bullet r_{23}=\sum_{1\leq i,j\leq m}\Big(v^*_i\otimes Tv_i\bullet Tv_j\otimes  v^*_j-T(\rho(Tv_j)v_i)\otimes v^*_i\otimes v^*_j-v^*_i\otimes v^*_j\otimes T(\rho(Tv_i)v_j)\Big);\\
    &\;\;\;\;r_{13}\bullet r_{23}=\sum_{1\leq i,j\leq m}\Big(v^*_i\otimes v^*_j\otimes Tv_i\bullet Tv_j-T(\rho(Tv_i)v_j)\otimes v^*_i\otimes v^*_j-v^*_i\otimes T(\rho(Tv_i)v_j)\otimes v^*_j\Big).
 \end{align*}  
Hence, we get
\begin{align*}
  &r_{12}\bullet r_{13}+ r_{13}\bullet r_{23}- r_{12}\bullet r_{23}\\
  &=\sum_{1\leq i,j\leq m}\Bigg\{\Big(Tv_i\bullet Tv_j-T(\rho(Tv_i)v_j)-T(\rho(Tv_j)v_i)\Big)\otimes v^*_i\otimes v^*_j\\
  &\quad \quad \quad \quad \quad + v^*_i\otimes v^*_j\otimes\Big(Tv_i\bullet Tv_j-T(\rho(Tv_i)v_j)-T(\rho(Tv_j)v_i)\Big)\\
  &\quad \quad \quad \quad \quad +v^*_i\otimes \Big(Tv_i\bullet Tv_j-T(\rho(Tv_i)v_j)-T(\rho(Tv_j)v_i)\Big)\otimes v^*_j\Bigg\}.
\end{align*}
So $r$ is a  solution of the mock-Lie YBE in the semi-direct product mock-Lie algebra $(A\ltimes_{\rho^*}V^*)$ if and only if $T$ is an $\mathcal{O}$-operator associated to $(V,\rho)$.
\end{proof}
Combining Proposition \ref{solution1} and Theorem \ref{solution2}, we have the following conclusion.
\begin{cor}
Let $(A,\bullet)$ be a mock-Lie algebra and $(V,\rho)$ be a representation of $A$. Set $\widehat{A}=A\ltimes_{\rho^*}V^*$. Let $T:V\rightarrow A$ be a linear map. Then the
following conditions are equivalent:
\begin{enumerate}
    \item $T$ is an $\mathcal{O}$-operator of $A$ associated to $(V,\rho)$.
    \item $T-\tau(T)$ is a skew-symmetric solution of the mock-Lie YBE in the Jordan algebra $\widehat{A}$.
    \item $T-\tau(T)$ is an $\mathcal{O}$-operator of the mock-Lie algebra $\widehat{A}$ associated to $(\widehat{A}^*,L_{\widehat{A}}^*)$.
\end{enumerate}
\end{cor}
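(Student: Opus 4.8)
The plan is to obtain the three-way equivalence by simply concatenating the two results just established, so almost all of the work has already been done. For $(1)\Leftrightarrow(2)$ I would invoke Theorem~\ref{solution2} verbatim: with $r=T-\tau(T)$, that theorem asserts precisely that $r$ is a skew-symmetric solution of the mock-Lie YBE in $\widehat{A}=A\ltimes_{\rho^*}V^*$ if and only if $T$ is an $\mathcal O$-operator of $A$ associated to $(V,\rho)$. Since every mock-Lie algebra is, in particular, a Jordan algebra (as recalled in the introduction), the phrase ``the Jordan algebra $\widehat{A}$'' in $(2)$ refers to nothing more than $\widehat{A}$ itself, so no additional argument is needed at this step.

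For $(2)\Leftrightarrow(3)$ I would apply Proposition~\ref{solution1} to the mock-Lie algebra $\widehat{A}$ (in the role of $(A,\bullet)$) with the tensor $r=T-\tau(T)\in\widehat{A}\otimes\widehat{A}$. This tensor is skew-symmetric by construction, so Proposition~\ref{solution1} applies and gives that $r$ is a solution of the mock-Lie YBE in $\widehat{A}$ if and only if
\begin{equation*}
    r(\xi)\bullet r(\eta)=r\big(L_{\widehat{A}}^*(r(\xi))\eta+L_{\widehat{A}}^*(r(\eta))\xi\big),\quad\forall\,\xi,\eta\in\widehat{A}^*,
\end{equation*}
where $r$ is viewed as the linear map $\widehat{A}^*\to\widehat{A}$ induced via Eq.~\eqref{formlinr}. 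By the definition of an $\mathcal O$-operator, taken with $(V,\rho)$ replaced by $(\widehat{A}^*,L_{\widehat{A}}^*)$, this identity is exactly the statement that $r=T-\tau(T)$ is an $\mathcal O$-operator of $\widehat{A}$ associated to $(\widehat{A}^*,L_{\widehat{A}}^*)$. Chaining the two equivalences yields $(1)\Leftrightarrow(2)\Leftrightarrow(3)$, which is the corollary.

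There is essentially no obstacle here: the only point requiring a moment's care is the bookkeeping that identifies the tensor $T-\tau(T)$ with the associated linear map $\widehat{A}^*\to\widehat{A}$ and checks that the $\mathcal O$-operator condition relative to $(\widehat{A}^*,L^*_{\widehat{A}})$ coincides \emph{on the nose} with Eq.~\eqref{solution} written for $\widehat{A}$. This is immediate once one unwinds the pairing convention \eqref{formlinr} together with the definition of an $\mathcal O$-operator, so the corollary follows with no further computation.
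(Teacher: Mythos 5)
Your proposal is correct and follows exactly the route the paper intends: the corollary is stated as a combination of Theorem~\ref{solution2} (giving $(1)\Leftrightarrow(2)$) and Proposition~\ref{solution1} applied to the mock-Lie algebra $\widehat{A}$ (giving $(2)\Leftrightarrow(3)$), with the word ``Jordan'' in item $(2)$ being a slip for ``mock-Lie,'' as you surmised. The paper's subsequent remark merely notes the alternative of proving $(1)\Leftrightarrow(3)$ directly; your argument matches the primary approach.
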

\begin{rmk}
The equivalence between the above $(1)$ and $(3)$ can be obtained
by a straightforward proof and then Theorem \ref{solution2} follows from this
equivalence and Proposition \ref{solution1}.
\end{rmk}
The following conclusion reveals the relationship between  mock-pre-Lie algebras
and the mock-Lie algebras with a  symplectic form:
\begin{pro}
Let $(A,\bullet)$ be a mock-Lie algebra with a  symplectic form $\omega$. Then there exists a compatible  pre-mock-Lie algebra structure $"\c"$ on $A$ given by 
\begin{equation}
    \omega(x\c y,z)=\omega(y,x\bullet z),\quad \forall x,y,z\in A.
\end{equation}
\end{pro}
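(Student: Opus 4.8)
The plan is to realize the inverse of the musical isomorphism attached to $\omega$ as an invertible $\mathcal{O}$-operator and then quote Proposition \ref{compremock}. First I would use the nondegeneracy of $\omega$ to introduce the linear isomorphism $\phi:A\to A^*$ given by $\langle\phi(x),y\rangle=\omega(x,y)$, and set $T:=\phi^{-1}:A^*\to A$; since $\omega$ is skew-symmetric we have $\langle\phi(x),y\rangle=-\langle\phi(y),x\rangle$, and $T$ is invertible because $\omega$ is nondegenerate. Note that $(A^*,L^*)$ is the coadjoint representation of $(A,\bullet)$, so it makes sense to speak of $\mathcal{O}$-operators associated to it.

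The core step is to show that $T$ is an $\mathcal{O}$-operator of $(A,\bullet)$ associated to $(A^*,L^*)$, i.e. $T(\xi)\bullet T(\eta)=T\big(L^*(T\xi)\eta+L^*(T\eta)\xi\big)$ for all $\xi,\eta\in A^*$. Writing $\xi=\phi(u)$, $\eta=\phi(v)$, so that $u=T\xi$ and $v=T\eta$, and applying the isomorphism $\phi$, this identity is equivalent to $\phi(u\bullet v)=L^*(u)\phi(v)+L^*(v)\phi(u)$. Pairing both sides with an arbitrary $z\in A$ and unwinding the definitions of $\phi$ and $L^*$, the right-hand side becomes $\langle\phi(v),u\bullet z\rangle+\langle\phi(u),v\bullet z\rangle=\omega(v,u\bullet z)+\omega(u,v\bullet z)$, while the left-hand side is $\omega(u\bullet v,z)$. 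Using commutativity of $\bullet$ together with skew-symmetry of $\omega$, this is exactly the symplectic cocycle identity $\omega(x\bullet y,z)+\omega(y\bullet z,x)+\omega(z\bullet x,y)=0$ evaluated at $(x,y,z)=(u,v,z)$. Hence $T$ is an invertible $\mathcal{O}$-operator associated to $(A^*,L^*)$.

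Now Proposition \ref{compremock} applies and yields a compatible mock-pre-Lie structure on $A$ given by $x\c y=T\big(L^*(x)T^{-1}(y)\big)=T\big(L^*(x)\phi(y)\big)$. It remains to check that this coincides with the stated formula: applying $\phi$ gives $\phi(x\c y)=L^*(x)\phi(y)$, whence $\omega(x\c y,z)=\langle\phi(x\c y),z\rangle=\langle L^*(x)\phi(y),z\rangle=\langle\phi(y),x\bullet z\rangle=\omega(y,x\bullet z)$ for all $z\in A$, as required. The main obstacle is the middle step, namely the faithful translation of the $\mathcal{O}$-operator equation through $\phi$ into the symplectic cocycle identity; the only delicate point there is the bookkeeping with the skew-symmetry of $\omega$ and the commutativity of $\bullet$ so that the three cyclic terms line up correctly. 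Everything else is formal once nondegeneracy supplies the isomorphism $\phi$.

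As an alternative one could bypass $\mathcal{O}$-operators: define $x\c y$ directly by $\omega(x\c y,z)=\omega(y,x\bullet z)$ — well defined since $\omega$ is nondegenerate — then verify the compatibility $x\c y+y\c x=x\bullet y$ from the cocycle identity and commutativity, and establish $Aass(x,y,z)+Aass(y,x,z)=0$ by pairing with $\omega(-,w)$ and repeatedly invoking the cocycle identity, skew-symmetry, and the anti-derivation property of $L$. This route is self-contained but more computational, so I would keep the $\mathcal{O}$-operator argument as the main proof.
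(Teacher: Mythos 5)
Your argument is correct and is essentially the paper's own proof: both define the musical isomorphism $\phi$ (the paper calls it $T$), show via the symplectic identity, skew-symmetry, and commutativity that its inverse is an invertible $\mathcal{O}$-operator associated to the coadjoint representation $(A^*,L^*)$, and then invoke Proposition \ref{compremock} and unwind the resulting formula to get $\omega(x\c y,z)=\omega(y,x\bullet z)$. The only difference is notational (your $T=\phi^{-1}$ is the paper's $T^{-1}$), so no further comparison is needed.
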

\begin{proof}
 Define a linear map $T:A\rightarrow A^*$ by $\langle T(x),y\rangle=\omega(x,y)$ for any $x,y\in A$. For any $\xi,\eta,\gamma\in A^*$, since $T$ is invertible, there exist $x,y,z\in A$ such that $Tx=\xi,Ty=\eta,Tz=\gamma$. Then $T^{-1}:A^*\rightarrow A$ is an $\mathcal{O}$-operator of $A$ associated to $(A^*,L^*)$ since for any $x,y,z\in A$, we have
 \begin{align*}
     &\langle T(x\bullet y),z\rangle=\omega(x\bullet y,z)=\omega(y,x\bullet z)+\omega(x,y\bullet z)\\
     &\quad \quad \quad\quad  \quad \;=\langle L^*(x)T(y),z\rangle +\langle L^*(y)T(x),z\rangle. 
 \end{align*}
 By Proposition \ref{compremock}, there is a compatible  mock-pre-Lie algebra structure $"\c"$ on
$A$ given by
\begin{equation*}
    x\c y=T^{-1}\big(L^*(x)T(y)\big),\quad \forall x,y\in A,
\end{equation*}
which implies that 
\begin{align*}
    &\omega(x\c y,z)=\langle T(x\c y),z\rangle =\langle L^*(x)T(y),z\rangle\\
    &\quad \quad \quad\quad\;=\langle T(y),x\bullet z\rangle =\omega(y,x\bullet z),\quad \forall x,y,z\in A.
\end{align*}
Hence the proof.
\end{proof}
The following conclusion provides a construction of solutions of mock-Lie YBE in
certain mock-Lie algebras from  mock-pre-Lie algebras.
\begin{cor}
Let $(A,\c)$ be a  mock-pre-Lie algebra. Let $\{e_1,\cdots,e_n\}$ be a basis of $A$ and $\{e^*_1,\cdots,e^*_n\}$ the dual basis. Then 
\begin{equation}
    r=\sum_{i=1}^{n}(e_i\otimes e^*_i-e^*_i \otimes e_i)
\end{equation}
is a skew-symmetric solution of the mock-Lie YBE in the mock-Lie algebra $(A^{ac})\ltimes_{\Theta^*}(A^{ac})^*$.
\end{cor}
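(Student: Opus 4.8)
The plan is to recognise $r$ as exactly the tensor $T-\tau(T)$ furnished by Theorem~\ref{solution2} in the case where $T$ is the identity map, and then to observe that the identity map is an $\mathcal{O}$-operator. Concretely, I would take $A$ in Theorem~\ref{solution2} to be the sub-adjacent mock-Lie algebra $A^{ac}$ (with product $x\star y = x\c y + y\c x$), and take the representation to be $(V,\rho)=(A^{ac},\Theta)$, where $\Theta(x)y = x\c y$; recall from the discussion preceding Proposition~\ref{compremock} that $(A^{ac},\Theta)$ is indeed a representation of $A^{ac}$.

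The only verification needed is that $T=\mathrm{id}_A\colon A^{ac}\to A^{ac}$ is an $\mathcal{O}$-operator associated to $(A^{ac},\Theta)$. Its defining identity reads
\begin{equation*}
T(u)\star T(v) = T\big(\Theta(Tu)v+\Theta(Tv)u\big),\qquad \forall u,v\in A,
\end{equation*}
which, upon substituting $T=\mathrm{id}$ and $\Theta(x)y = x\c y$, becomes $u\star v = u\c v + v\c u$. This is precisely the definition of the product $\star$ on $A^{ac}$, so $\mathrm{id}_A$ is an $\mathcal{O}$-operator.

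It then remains to apply Theorem~\ref{solution2} (equivalently, the Corollary following it, with $(1)\Leftrightarrow(2)$) with this data. Expanding $T=\mathrm{id}_A$ in the basis $\{e_1,\dots,e_n\}$ with dual basis $\{e_1^*,\dots,e_n^*\}$ gives $T=\sum_{i=1}^n T(e_i)\otimes e_i^* = \sum_{i=1}^n e_i\otimes e_i^*$, hence
\begin{equation*}
r = T-\tau(T) = \sum_{i=1}^n\big(e_i\otimes e_i^* - e_i^*\otimes e_i\big),
\end{equation*}
and Theorem~\ref{solution2} tells us that, since $\mathrm{id}_A$ is an $\mathcal{O}$-operator associated to $(A^{ac},\Theta)$, this skew-symmetric $r$ is a solution of the mock-Lie YBE in the semi-direct product $A^{ac}\ltimes_{\Theta^*}(A^{ac})^*$, which is the claim.

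I do not expect a genuine obstacle: the substantive work is already carried out in Theorem~\ref{solution2}, and the argument reduces to the elementary observation that the identity map satisfies the $\mathcal{O}$-operator identity exactly because $x\star y = x\c y + y\c x$ is how the sub-adjacent bracket is defined. The one point requiring care is bookkeeping: one must make sure the ambient mock-Lie algebra is formed with the dual representation $\Theta^*$ on $(A^{ac})^*$, so that the hypotheses of Theorem~\ref{solution2} are matched verbatim.
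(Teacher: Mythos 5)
Your proposal is correct and follows exactly the paper's own argument: the paper's proof likewise invokes Theorem~\ref{solution2} together with the observation that the identity map is an $\mathcal{O}$-operator of $A^{ac}$ associated to $(A,\Theta)$, which you verify in slightly more detail. No discrepancies.
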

\begin{proof}
It follows from Theorem \ref{solution2} and the fact that the identity map $id$ is an $\mathcal{O}$-operator of
the sub-adjacent mock-Lie algebra $A^{ac}$ of a  mock-pre-Lie algebra associated to the representation $(A, \Theta)$.
\end{proof}

\end{document}